\theoremstyle{plain}
\newtheorem{theorem}{Theorem}
\newtheorem{lemma}[theorem]{Lemma}
\newtheorem{proposition}[theorem]{Proposition}
\theoremstyle{definition}
\theoremstyle{remark}
\numberwithin{equation}{section}
\numberwithin{theorem}{section}
\author{Kyle Pratt}
\address{Department of Mathematics, University of Illinois, 1409 West Green Street, Urbana, IL 61801, United States}
\email{kpratt4@illinois.edu}
\subjclass[2010]{11M06, 11M26. \\ \indent \textit{Keywords and phrases}: Dirichlet $L$-function, non-vanishing, central point, mollifier, sums of Kloosterman sums.}
\title{Average non-vanishing of Dirichlet $L$-functions at the central point}
\begin{document}
\date{}

\begin{abstract}
The Generalized Riemann Hypothesis implies that at least 50\% of the central values $L \left( \frac{1}{2},\chi\right)$ are non-vanishing as $\chi$ ranges over primitive characters modulo $q$. We show that one may unconditionally go beyond GRH, in the sense that if one averages over primitive characters modulo $q$ and averages $q$ over an interval, then at least 50.073\% of the central values are non-vanishing. The proof utilizes the mollification method with a three-piece mollifier, and relies on estimates for sums of Kloosterman sums due to Deshouillers and Iwaniec.

Note: The author has been made aware of an error in this work. It seems the error can be fixed, by using a different argument, and the author will present a correction in due course.
\end{abstract}

\maketitle

\section*{Notice}

Kaisa Matom{\"a}ki and Martin {\v C}ech have drawn my attention to an error in my paper ``Average non-vanishing of Dirichlet $L$-functions at the central point.'' There is a mistake in the calculation of the main term in Lemma 3.4; a factor of $\theta_2/\theta_3$ is missing in the calculation of the $N_2$ term. The correct expression is
\begin{align*}
N_2 =-(1+o(1))\varphi^+(q) \theta_2\int_0^1 P_2 (1-\alpha) P_3 \left(1 - \alpha \frac{\theta_2}{\theta_3} \right) d\alpha.
\end{align*}
When this error is corrected, the combination of the three mollifiers does not yield a nonvanishing proportion greater than one-half. Thus, the statement of Theorem 1.1 is, at present, unproven.

It appears that, with an alternate method, one can recover a nonvanishing proportion greater than $50\%$ (though likely not a percentage as large as that claimed in Theorem 1.1). The author is working on a fix, and will present a corrected argument in due course.

\section{Introduction}

It is widely believed that no primitive Dirichlet $L$-function $L \left( s,\chi\right)$ vanishes at the central point $s = \frac{1}{2}$. Most of the progress towards this conjecture has been made by working with various families of Dirichlet $L$-functions. Balasubramanian and Murty \cite{BalMur} showed that, in the family of primitive characters modulo $q$, a positive proportion of the $L$-functions do not vanish at the central point. Iwaniec and Sarnak \cite{IwaSar} later improved this lower bound, showing that at least $\frac{1}{3}$ of the $L$-functions in this family do not vanish at the central point. Bui \cite{Bui} improved this further to 34.11\%, and Khan and Ng\^o \cite{KhaNgo} showed at least $\frac{3}{8}$ of the central values are non-vanishing\footnote{A clear preference for ``non-vanishing'' or ``nonvanishing'' has not yet materialized in the literature. We exclusively use the former term throughout this work.} for prime moduli. Soundararajan \cite{Sou} worked with a family of quadratic Dirichlet characters, and showed that $\frac{7}{8}$ of the family do not vanish at $s = \frac{1}{2}$. These proofs all proceed through the mollification method, which we discuss in section \ref{sec: mollify} below.

If one assumes the Generalized Riemann Hypothesis, one can show that at least half of the primitive characters $\chi \pmod{q}$ satisfy $L \left( \frac{1}{2},\chi\right) \neq 0$ \cite{BalMur,Sic},\cite[Exercise 18.2.8]{MilTak}. One uses the explicit formula, rather than mollification, and the proportion $\frac{1}{2}$ arises from the choice of a test function with certain positivity properties.

It seems plausible that one may obtain a larger proportion of non-vanishing by also averaging over moduli $q$. Indeed, Iwaniec and Sarnak \cite{IwaSar} already claimed that by averaging over moduli one can prove at least half of the central values are nonzero. This is striking, in that it is as strong, on average, as the proportion obtained via GRH.

A natural question is whether, by averaging over moduli, one can breach the 50\% barrier, thereby going beyond the immediate reach of GRH. We answer this question in the affirmative.

Let ${ \sideset{}{^*}{\textstyle\sum}_{\chi (q)} }$ denote a sum over the primitive characters modulo $q$, and define $\varphi^*(q)$ to be the number of primitive characters modulo $q$.

\begin{theorem}\label{thm: main theorem}
Let $\Psi$ be a fixed, nonnegative smooth function, compactly supported in $[\frac{1}{2},2]$, which satisfies
\begin{align*}
\int_\mathbb{R} \Psi(x) dx > 0.
\end{align*}
Then for $Q$ sufficiently large we have
\begin{align*}
\sum_{q} \Psi \left( \frac{q}{Q}\right) \frac{q}{\varphi(q)} \ \sideset{}{^*}\sum_{\substack{\chi (q) \\ L \left( \frac{1}{2},\chi\right) \neq 0}} 1 \geq 0.50073 \sum_{q} \Psi \left( \frac{q}{Q}\right) \frac{q}{\varphi(q)} \varphi^*(q).
\end{align*}
\end{theorem}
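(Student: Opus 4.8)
The plan is to run the mollification method. One introduces a Dirichlet polynomial $M(\chi)=\sum_{n}\tfrac{a(n)\chi(n)}{\sqrt n}$ of length $Q^{\theta}$ whose coefficients $a(n)$ are chosen so that $L(\tfrac12,\chi)M(\chi)$ is, on average over the family, close to a fixed constant. Writing $W(q):=\Psi(q/Q)\tfrac{q}{\varphi(q)}\ge 0$ and $\mathcal T:=\sum_q W(q)\varphi^*(q)$ (which, by the hypothesis $\int\Psi>0$, satisfies $\mathcal T\asymp Q^2$, so the main terms below genuinely dominate), and applying the Cauchy--Schwarz inequality to the double sum over $q$ and over primitive $\chi\pmod q$ --- using that $L(\tfrac12,\chi)M(\chi)$ vanishes whenever $L(\tfrac12,\chi)$ does --- one obtains
\[
\sum_q W(q)\sideset{}{^*}\sum_{\substack{\chi(q)\\ L(\frac12,\chi)\neq 0}}1\;\ge\;\frac{\mathcal M_1^{\,2}}{\mathcal M_2},
\]
where
\[
\mathcal M_1:=\sum_q W(q)\sideset{}{^*}\sum_{\chi(q)}L(\tfrac12,\chi)M(\chi),\qquad
\mathcal M_2:=\sum_q W(q)\sideset{}{^*}\sum_{\chi(q)}\bigl|L(\tfrac12,\chi)M(\chi)\bigr|^2.
\]
Hence the theorem follows once one has asymptotics $\mathcal M_1\sim c_1\mathcal T$ and $\mathcal M_2\sim c_2\mathcal T$ with $c_1^2/c_2>0.50073$. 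For $M(\chi)$ I would take a three-piece shape $M=M_1+M_2+M_3$, with $M_1$ the classical Iwaniec--Sarnak choice $\sum_{n\le Q^{\theta_1}}\tfrac{\mu(n)\chi(n)}{\sqrt n}P_1\!\bigl(\tfrac{\log(Q^{\theta_1}/n)}{\theta_1\log Q}\bigr)$, and $M_2,M_3$ additional Dirichlet polynomials of the same type but with longer, convolution-type coefficient supports carrying free polynomials $P_2,P_3$; morally $M_1+M_2+M_3$ is a higher-quality truncation of $1/L(\tfrac12,\chi)$, and it is these extra pieces that make beating $50\%$ possible.

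Evaluating $\mathcal M_1$ is the easier half. Insert an approximate functional equation for $L(\tfrac12,\chi)$ (two sums of length $\asymp\sqrt q$, the dual one carrying the factor $\tau(\chi)/\sqrt q$, with even and odd $\chi$ treated separately), multiply by $M(\chi)$, and apply orthogonality of primitive characters, sieving the primitivity condition by M\"obius inversion. Since the mollifier is far shorter than the conductor, only the diagonal term survives, producing $\mathcal M_1\sim\mathcal T\bigl(\alpha_1 P_1(1)+\alpha_2 P_2(1)+\alpha_3 P_3(1)\bigr)$ for explicit constants $\alpha_i$ determined by the shapes of the three pieces.

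The crux is $\mathcal M_2$. One writes $|L(\tfrac12,\chi)|^2=2\sum_{m,n}\tfrac{\chi(m)\bar\chi(n)}{\sqrt{mn}}V\!\bigl(\tfrac{mn}{q}\bigr)+(\text{small})$ with $V$ a fixed smooth cutoff, so that expanding $|L(\tfrac12,\chi)M(\chi)|^2$ and applying orthogonality gives, for each pair of mollifier pieces, sums $\sum_q W(q)\sum_{mh\equiv nk\,(q)}(\cdots)$. The diagonal $mh=nk$ yields the main term $\mathcal M_2\sim\mathcal T\cdot\mathcal Q(P_1,P_2,P_3;\theta_1,\theta_2,\theta_3)$, an explicit positive quadratic functional obtained by the standard contour-shift and divisor-sum evaluation. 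The off-diagonal terms ($mh-nk=rq$ with $r\neq 0$) are where the average over $q$ is indispensable: summing over $q\sim Q$, each such term is governed by a shifted convolution of divisor-type functions, which after Poisson/Voronoi summation in the smooth variables opens into sums of Kloosterman sums; the Deshouillers--Iwaniec estimates for such sums on average then bound these by $O(\mathcal T\,Q^{-\delta})$ (or, in ranges where a secondary main term appears, evaluate them explicitly), provided $\theta_1,\theta_2,\theta_3$ lie below an explicit admissible threshold --- one which, thanks to the $q$-average, is large enough that the best attainable $c_1^2/c_2$ exceeds $\tfrac12$. I expect this off-diagonal step --- pushing the Deshouillers--Iwaniec input uniformly through all of the cross-terms $M_i\bar M_j$ while tracking the dependence on the $\theta_i$ --- to be the main obstacle.

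Finally, with $\mathcal M_1\sim c_1\mathcal T$ and $\mathcal M_2\sim c_2\mathcal T$ expressed as functionals of $P_1,P_2,P_3$ and the lengths $\theta_1,\theta_2,\theta_3$, one maximizes $c_1^2/c_2$. The optimal polynomials solve a coupled system of Euler--Lagrange equations and may be taken of low degree; a numerical optimization over the admissible $\theta_i$ and the coefficients of the $P_i$ yields a value strictly above $0.50073$, and combined with the Cauchy--Schwarz bound above this gives the theorem.
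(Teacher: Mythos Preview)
Your overall architecture---Cauchy--Schwarz, a multi-piece mollifier, asymptotics for the first and second mollified moments, Deshouillers--Iwaniec for the off-diagonal---is the right one, and matches the paper. But there is a genuine gap in the choice of mollifier, and it is exactly the step that makes the theorem possible.

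You describe $M_2,M_3$ as ``Dirichlet polynomials of the same type'' as the Iwaniec--Sarnak piece, only with longer, convolution-type coefficients. That is not enough. With mollifiers of the shape $\sum_n a(n)\chi(n)n^{-1/2}$, the proportion one obtains is $\theta/(1+\theta)$ in the length parameter $\theta$, so one needs $\theta>1$, and even with the $q$-average and the asymptotic large sieve one only reaches $\theta=1-\varepsilon$; convolution-type extensions of the coefficients do not change this barrier. The paper's mollifier has two pieces that are \emph{not} of Iwaniec--Sarnak type: the Michel--VanderKam piece
\[
\psi_{\mathrm{MV}}(\chi)=\epsilon(\overline{\chi})\sum_{\ell\le y_3}\frac{\mu(\ell)\overline{\chi}(\ell)}{\ell^{1/2}}P_3[\ell],
\]
which carries the root number $\epsilon(\overline{\chi})$ and mollifies from the other side of the functional equation (so the effective length doubles, giving proportion $2\theta/(1+2\theta)$), and Bui's piece
\[
\psi_{\mathrm B}(\chi)=\frac{1}{L}\sum_{bc\le y_2}\frac{\Lambda(b)\mu(c)\overline{\chi}(b)\chi(c)}{(bc)^{1/2}}P_2[bc],
\]
which mixes $\chi$ and $\overline{\chi}$. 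The Deshouillers--Iwaniec input lets one take $\theta$ up to $\tfrac12-\varepsilon$ in the MV piece, which falls just short of $50\%$; adding the Bui piece supplies the missing increment. Your proposal omits precisely these two structural features, so as written it cannot cross the $50\%$ threshold.

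A secondary point: because of the $\epsilon(\chi)$ factor, the hard error terms in the paper are not the shifted-convolution sums $mh\equiv nk\pmod q$ you anticipate. Opening the Gauss sum produces genuine additive characters $e(n\,\overline{m\ell_1\ell_3 v}/w)$, and the analysis proceeds via reciprocity and a direct appeal to the Deshouillers--Iwaniec bound on sums of incomplete Kloosterman sums (Lemma~\ref{lem: sum of kloos sum}), together with a Ramanujan-sum argument to reduce $N$. The $|\psi_{\mathrm{IS}}+\psi_{\mathrm B}|^2$ contribution, by contrast, is handled for a single $q$ by citing Bui and requires no $q$-average at all.
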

Thus, roughly speaking, a randomly chosen central value $L \left( \frac{1}{2},\chi\right)$ is more likely nonzero than zero. We remark also that the appearance of the arithmetic weight $\frac{q}{\varphi(q)}$ is technically convenient, but not essential.

\section{Mollification, and a sketch for Theorem \ref{thm: main theorem}}\label{sec: mollify}

The proof of Theorem \ref{thm: main theorem} relies on the powerful technique of mollification. For each character $\chi$ we associate a function $\psi(\chi)$, called a mollifier, that serves to dampen the large values of $L(\frac{1}{2},\chi)$. By the Cauchy-Schwarz inequality we have
\begin{align}\label{eq: outline cauchy schwarz}
\frac{\left|\sum_{q \asymp Q}\sideset{}{^*}\sum_{\chi (q)} L \left( \frac{1}{2},\chi \right) \psi(\chi) \right|^2}{\sum_{q \asymp Q}\sideset{}{^*}\sum_{\chi (q)} \left|L \left( \frac{1}{2},\chi \right) \psi(\chi)\right|^2} \leq \sum_{q \asymp Q} \sideset{}{^*}\sum_{\substack{\chi (q) \\ L \left( \frac{1}{2},\chi\right) \neq 0}} 1.
\end{align}
The better the mollification by $\psi$, the larger proportion of non-vanishing one can deduce.

It is natural to choose $\psi(\chi)$ such that
\begin{align*}
\psi(\chi) \approx \frac{1}{L \left( \frac{1}{2},\chi\right)}.
\end{align*}
Since $L \left( \frac{1}{2},\chi\right)$ can be written as a Dirichlet series
\begin{align}\label{eq: long central value approx}
L \left( \frac{1}{2},\chi\right) = \sum_{n=1}^\infty \frac{\chi(n)}{n^{\frac{1}{2}}},
\end{align}
this suggests the choice
\begin{align}\label{eq: rough IS mollifier}
\psi(\chi) \approx \sum_{\ell \leq y} \frac{\mu(\ell)\chi(\ell)}{\ell^{\frac{1}{2}}}.
\end{align}
We have introduced a truncation $y$ in anticipation of the need to control various error terms that will arise. We write $y = Q^{\theta}$, where $\theta > 0$ is a real number. At least heuristically, larger values of $\theta$ yield better mollification by \eqref{eq: rough IS mollifier}. Iwaniec and Sarnak \cite{IwaSar} made this choice \eqref{eq: rough IS mollifier} (up to some smoothing), and found that the proportion of non-vanishing attained was
\begin{align}\label{eq: IS prop of nonvanishing}
\frac{\theta}{1+\theta}.
\end{align}
When $\theta =1$ we see \eqref{eq: IS prop of nonvanishing} is exactly $\frac{1}{2}$, so we need $\theta > 1$ in order to conclude Theorem \ref{thm: main theorem}. This seems beyond the range of present technology. Without averaging over moduli we may take $\theta = \frac{1}{2} - \varepsilon$, and the asymptotic large sieve of Conrey, Iwaniec, and Soundararajan \cite{CIS} allows one to take $\theta = 1 - \varepsilon$ if one averages over moduli. This just falls short of our goal.

Thus, a better mollifier than \eqref{eq: rough IS mollifier} is required. Part of the problem is that \eqref{eq: long central value approx} is an inefficient representation of $L \left( \frac{1}{2},\chi\right)$. A better representation of $L\left( \frac{1}{2},\chi\right)$ may be obtained through the approximate functional equation, which states
\begin{align}\label{eq: rough app func equation}
L \left( \frac{1}{2},\chi\right) \approx \sum_{n \leq q^{1/2}} \frac{\chi(n)}{n^{\frac{1}{2}}} + \epsilon(\chi) \sum_{n \leq q^{1/2}} \frac{\overline{\chi}(n)}{n^{\frac{1}{2}}}.
\end{align}
Here $\epsilon(\chi)$ is the root number, which is a complex number of modulus 1 defined by
\begin{align}\label{eq: defn of epsilon root number}
\epsilon(\chi) = \frac{1}{q^{\frac{1}{2}}} \sum_{h (\text{mod }q)} \chi(h) e\left( \frac{h}{q}\right).
\end{align}
Inspired by \eqref{eq: rough app func equation}, Michel and VanderKam \cite{MicVan} chose a mollifier
\begin{align}\label{eq: rough MV mollifier}
\psi(\chi) \approx \sum_{\ell \leq y} \frac{\mu(\ell)\chi(\ell)}{\ell^{\frac{1}{2}}} + \overline{\epsilon}(\chi) \sum_{\ell \leq y} \frac{\mu(\ell)\overline{\chi}(\ell)}{\ell^{\frac{1}{2}}}.
\end{align}
We note that Soundararajan \cite{Sou1} earlier used a mollifier of this shape in the context of the Riemann zeta function.

For $y = Q^\theta$, Michel and VanderKam found that \eqref{eq: rough MV mollifier} gives a non-vanishing proportion of
\begin{align}\label{eq: MV prop of nonvanishing}
\frac{2\theta}{1+2\theta}.
\end{align}
Thus, we need $\theta = \frac{1}{2} + \varepsilon$ in order for \eqref{eq: MV prop of nonvanishing} to imply a proportion of non-vanishing greater than $\frac{1}{2}$. However, the more complicated nature of the mollifier \eqref{eq: rough MV mollifier} means that, without averaging over moduli, only the choice $\theta = \frac{3}{10} - \varepsilon$ is acceptable \cite{KhaNgo}.

As we allow ourselves to average over moduli, however, one might hope to obtain \eqref{eq: MV prop of nonvanishing} for $\theta = \frac{1}{2} + \varepsilon$. Again we fall just short of our goal. Using a powerful result of Deshouillers and Iwaniec on cancellation in sums of Kloosterman sums (see Lemma \ref{lem: sum of kloos sum} below) we shall show that $\theta = \frac{1}{2} - \varepsilon$ is acceptable, but increasing $\theta$ any further seems very difficult. It follows that we need any extra amount of mollification in order to obtain a proportion of non-vanishing strictly greater than $\frac{1}{2}$.

The solution is to attach yet another piece to the mollifier $\psi(\chi)$, but here we wish for the mollifier to have a very different shape from \eqref{eq: rough MV mollifier}. Such a mollifier was utilized by Bui \cite{Bui}, who showed that
\begin{align}\label{eq: rough B mollifier}
\psi_{\text{B}}(\chi) \approx \frac{1}{\log q} \mathop{\sum \sum}_{bc \leq y} \frac{\Lambda(b)\mu(c) \overline{\chi}(b) \chi(c)}{(bc)^{\frac{1}{2}}}
\end{align}
is a mollifier for $L \left( \frac{1}{2},\chi\right)$. It turns out that adding \eqref{eq: rough B mollifier} to \eqref{eq: rough MV mollifier} gives a sufficient mollifier to conclude Theorem \ref{thm: main theorem}.

One may roughly motivate a mollifier of the shape \eqref{eq: rough B mollifier} as follows. Working formally,
\begin{align*}
\frac{1}{L(\frac{1}{2},\chi)} &= \frac{L (\frac{1}{2},\overline{\chi})}{L(\frac{1}{2},\chi)L (\frac{1}{2},\overline{\chi})} = \mathop{\sum \sum\sum}_{r,s,v} \frac{\overline{\chi}(r)\mu(s) \overline{\chi}(s) \mu(v) \chi(v)}{(rsv)^{\frac{1}{2}}} \\
&\approx \mathop{\sum \sum\sum}_{r,s,v} \frac{\log r}{\log q} \frac{\overline{\chi}(r)\mu(s) \overline{\chi}(s) \mu(v) \chi(v)}{(rsv)^{\frac{1}{2}}} \\
&= \frac{1}{\log q} \mathop{\sum\sum}_{u,v} \frac{(\mu \star \log)(u) \overline{\chi}(u) \mu(v) \chi(v)}{(uv)^{\frac{1}{2}}}.
\end{align*}

One might wonder what percentage of non-vanishing one can obtain using only a mollifier of the shape \eqref{eq: rough B mollifier}. The analysis for Bui's mollifier is more complicated, and it does not seem possible to write down simple expressions like \eqref{eq: IS prop of nonvanishing} or \eqref{eq: MV prop of nonvanishing} that give a percentage of non-vanishing for \eqref{eq: rough B mollifier} in terms of $\theta$. If one assumes, perhaps optimistically, that averaging over moduli allows one to take any $\theta < 1$ in \eqref{eq: rough B mollifier}, then some numerical computation indicates that the non-vanishing percentage does not exceed 27\%, say.

We remark that, in the course of the proof, the main terms are easily extracted and we have no need here for the averaging over moduli. We require the averaging over moduli in order to estimate some of the error terms.

The structure of the remainder of the paper is as follows. In section \ref{sec: thm first steps} we reduce the proof of Theorem \ref{thm: main theorem} to two technical results, Lemma \ref{lem: main term IS MV} and Lemma \ref{lem: main term B MV}, which give asymptotic evaluations of certain mollified sums. In section \ref{sec: IS MV main term} we extract the main term of Lemma \ref{lem: main term IS MV}, and in section \ref{sec: IS MV error} we use estimates on sums of Kloosterman sums to complete the proof of this lemma. Section \ref{sec: MV B main term} similarly proves the main term of Lemma \ref{lem: main term B MV}, but this derivation is longer than that given in section \ref{sec: IS MV main term} because the main terms are more complicated. In the final section, section \ref{sec: B MV error}, we bound the error term in Lemma \ref{lem: main term B MV}, again using results on sums of Kloosterman sums.

\section{Proof of Theorem \ref{thm: main theorem}: first steps}\label{sec: thm first steps}

Let us fix some notation and conventions that shall hold for the remainder of the paper. 

The notation $a \equiv b (q)$ means $a \equiv b \pmod{q}$, and when $a(q)$ occurs beneath a sum it indicates a summation over residue classes modulo $q$.

We denote by $\epsilon$ an arbitrarily small positive quantity that may vary from one line to the next, or even within the same line. Thus, we may write $X^{2\epsilon} \leq X^\epsilon$ with no reservations.

We need to treat separately the even primitive characters and odd primitive characters. We focus exclusively on the even primitive characters, since the case of odd characters is nearly identical. We write $\sideset{}{^+}{\textstyle\sum}_{\chi (q)}$ for a sum over even primitive characters modulo $q$, and we write $\varphi^+(q)$ for the number of such characters. Observe that $\varphi^+(q) = \frac{1}{2}\varphi^*(q) + O(1)$.

We shall encounter the Ramanujan sum $c_q(n)$ (see the proof of Proposition \ref{prop: cancellation in n}), defined by
\begin{align*}
c_q(n) &= \sum_{\substack{a(q) \\ (a,q)=1}} e \left(\frac{an}{q} \right).
\end{align*}
We shall only need to know that $c_q(1) = \mu(q)$ and $|c_q(n)| \leq (q,n)$, where $(q,n)$ is the greatest common divisor of $q$ and $n$.

We now fix a smooth function $\Psi$ as in the statement of Theorem \ref{thm: main theorem}, and allow all implied constants to depend on $\Psi$. We let $Q$ be a large real number, and set $y_i = Q^{\theta_i}$ for $i \in \{1,2,3\}$, where $0 < \theta_i < \frac{1}{2}$ are fixed real numbers. We further define $L = \log Q$. The notation $o(1)$ denotes a quantity that goes to zero as $Q$ goes to infinity.

Let us now begin the proof of Theorem \ref{thm: main theorem} in earnest.  As discussed in section \ref{sec: mollify}, we choose our mollifier $\psi(\chi)$ to have the form 
\begin{align}\label{eq: three piece mollifier}
\psi(\chi) = \psi_{\text{IS}}(\chi)  + \psi_{\text{B}}(\chi)+ \psi_{\text{MV}}(\chi),
\end{align}
where
\begin{align}\label{eq: defn of mollifiers}
\psi_{\text{IS}}(\chi) &= \sum_{\ell \leq y_1} \frac{\mu(\ell)}{\ell^{\frac{1}{2}}} P_1 \left(\frac{\log(y_1/\ell)}{\log y_1} \right), \nonumber \\
\psi_{\text{B}}(\chi) &= \frac{1}{L}\mathop{\sum \sum}_{bc \leq y_2} \frac{\Lambda(b)\mu(c) \overline{\chi}(b) \chi(c)}{(bc)^{\frac{1}{2}}} P_2 \left(\frac{\log(y_2/bc)}{\log y_2} \right), \\
\psi_{\text{MV}}(\chi) &= \epsilon(\overline{\chi}) \sum_{\ell \leq y_3} \frac{\mu(\ell)\overline{\chi}(\ell)}{\ell^{\frac{1}{2}}}P_3 \left(\frac{\log(y_3/\ell)}{\log y_3} \right). \nonumber
\end{align}
The smoothing polynomials $P_i$ are real and satisfy $P_i(0)=0$. For notational convenience we write
\begin{align*}
P_i \left(\frac{\log(y_i/x)}{\log y_i} \right) = P_i[x].
\end{align*}
There is some ambiguity in this notation because of the $y_i$-dependence in the polynomials, and this needs to be remembered in calculations.

Now define sums $S_1$ and $S_2$ by
\begin{align}\label{eq: defn of S1 and S2}
S_1 &= \sum_{q} \Psi \left( \frac{q}{Q}\right) \frac{q}{\varphi(q)} \ \sideset{}{^+}\sum_{\substack{\chi (q)}} L \left( \frac{1}{2},\chi\right) \psi(\chi), \\
S_2 &= \sum_{q} \Psi \left( \frac{q}{Q}\right) \frac{q}{\varphi(q)} \ \sideset{}{^+}\sum_{\substack{\chi (q)}} \left|L \left( \frac{1}{2},\chi\right) \psi(\chi)\right|^2. \nonumber
\end{align}
We apply Cauchy-Schwarz as in \eqref{eq: outline cauchy schwarz} and get
\begin{align}\label{eq: cauchy for nonvanishing}
\sum_{q} \Psi \left( \frac{q}{Q}\right) \frac{q}{\varphi(q)} \ \sideset{}{^+}\sum_{\substack{\chi (q) \\ L \left( \frac{1}{2},\chi\right) \neq 0}} 1 &\geq \frac{S_1^2}{S_2}.
\end{align}
The proof of Theorem \ref{thm: main theorem} therefore reduces to estimating $S_1$ and $S_2$. We obtain asymptotic formulas for these two sums.

\begin{lemma}\label{lem: eval of S1}
Suppose $0 < \theta_1, \theta_2 < 1$ and $0 < \theta_3 < \frac{1}{2}$. Then
\begin{align*}
S_1 = \left(P_1(1) + P_3(1) +\frac{\theta_2}{2}\widetilde{P_2}(1) + o(1) \right) \sum_{q} \Psi \left( \frac{q}{Q}\right) \frac{q}{\varphi(q)} \varphi^+(q),
\end{align*}
where
\begin{align*}
\widetilde{P_2}(x) = \int_0^x P_2(u) du.
\end{align*}
\end{lemma}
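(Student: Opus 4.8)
The plan is to expand $S_1$ linearly over the three pieces of the mollifier, $S_1 = S_1^{\mathrm{IS}} + S_1^{\mathrm{B}} + S_1^{\mathrm{MV}}$, and to evaluate each separately against the smooth sum over $q$. For each piece the starting point is the approximate functional equation for $L(\tfrac12,\chi)$ in the form \eqref{eq: rough app func equation}, made precise with a smooth weight of length $\asymp q^{1/2}$; crucially, after multiplying by the mollifier and summing over even primitive characters one uses the orthogonality relation for even primitive characters modulo $q$ (the standard expansion $\sideset{}{^+}{\textstyle\sum}_{\chi(q)} \chi(m)\overline{\chi}(n) = \tfrac12 \sum_{d \mid q} \mu(q/d) (\mathbbm{1}_{m \equiv n (d)} + \mathbbm{1}_{m \equiv -n (d)})$ for $(mn,q)=1$). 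The diagonal terms $m \equiv \pm n$ produce the main term, and the off-diagonal terms are shown to be $o(1)$ after the sum over $q$ — but for $S_1$ this is easy, since $n,m \ll q^{1/2}$ and $\ell \ll y_i$ with $\theta_i < 1$, so the off-diagonal congruences force a true identity $m\ell = n$ (or the ``$-$'' version, which cannot happen for positive integers with the ranges in play once $q$ is large), leaving only convergent arithmetic sums. In particular no averaging over $q$ is needed here, consistent with the remark in the excerpt; the weight $\frac{q}{\varphi(q)}$ and the sum $\sum_q \Psi(q/Q)$ simply ride along.

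Concretely, for $\psi_{\mathrm{IS}}$ the diagonal is $\sum_{\ell \le y_1} \frac{\mu(\ell)}{\ell} P_1[\ell] \cdot (\text{arithmetic factor})$, and a contour-integral / Perron argument (shifting in the Dirichlet series $\sum_\ell \mu(\ell)\ell^{-s}$, whose pole structure at $s=1$ from $1/\zeta(s)$ gives the residue) yields the contribution $P_1(1) + o(1)$ after accounting for the $\frac{q}{\varphi(q)}$ weight which precisely cancels the Euler factors at primes dividing $q$. The $\psi_{\mathrm{MV}}$ piece is handled the same way but now the root number $\epsilon(\overline\chi)$ combines with the $\epsilon(\chi)$ implicit in the second term of the approximate functional equation \eqref{eq: rough app func equation} — since $\epsilon(\chi)\epsilon(\overline\chi) = 1$ — so that the "dual" part of the AFE pairs cleanly with the dual mollifier and again gives a diagonal of the same type, producing $P_3(1)+o(1)$; the "non-dual" cross terms carry an unmatched root number $\epsilon(\chi)$ (a normalized Gauss sum $c_q$-type object), and these are where the range $\theta_3 < \tfrac12$ is used to guarantee they are $o(1)$ even without $q$-averaging. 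For $\psi_{\mathrm{B}}$, the double sum over $bc \le y_2$ with the $\Lambda(b)\mu(c)$ coefficients pairs the $\overline\chi(b)$ against the Dirichlet series and $\chi(c)$ against a $\chi(n)$, and after the same orthogonality step one is left evaluating $\frac{1}{L}\sum_{bc \le y_2} \frac{\Lambda(b)\mu(c)}{bc} P_2[bc]$; the presence of $\Lambda(b)$ (i.e. a $-\zeta'/\zeta$ factor) shifts the relevant residue so that this is asymptotically $\frac{\theta_2}{2}\widetilde{P_2}(1)$, where the integral $\widetilde{P_2}(x) = \int_0^x P_2(u)\,du$ and the factor $\theta_2$ come from converting the sum over $b$ (effectively $\sum_{b} \Lambda(b)/b \sim \log(\cdot)$) into the variable of the smoothing polynomial, and the $\tfrac12$ from $\varphi^+(q) = \tfrac12\varphi^*(q)+O(1)$ being the normalization — this is the one computation requiring a little care with the logarithmic weight $1/L$ and the double-sum change of variables.

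The main obstacle, modest as it is for $S_1$, is the bookkeeping in the $\psi_{\mathrm{B}}$ term: correctly tracking the $1/L$ normalization through the double Dirichlet sum, performing the change of variables that turns $\sum_{bc \le y_2}$ into an integral against $P_2$, and verifying that the residue computation (with the extra $\zeta'/\zeta$ from $\Lambda$) produces exactly $\frac{\theta_2}{2}\widetilde{P_2}(1)$ rather than some other constant multiple. A secondary point is making the approximate functional equation uniform enough in $q$ (with error saving a power of $q$) so that summing against $\Psi(q/Q)\frac{q}{\varphi(q)}$ over $q \asymp Q$ preserves the $o(1)$; this is routine since the AFE error term is $O(q^{-\delta})$ for some $\delta>0$ and $\sum_q \Psi(q/Q)\frac{q}{\varphi(q)} \ll Q\log Q$. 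I would organize the write-up as: (i) insert the AFE and apply even-primitive-character orthogonality to each of the three sub-sums; (ii) isolate diagonal terms and bound off-diagonal terms trivially using $\theta_i < 1$ (resp. $\theta_3 < \tfrac12$ for the unmatched-root-number terms); (iii) evaluate each diagonal by a standard contour shift, reading off $P_1(1)$, $P_3(1)$, and $\frac{\theta_2}{2}\widetilde{P_2}(1)$; (iv) sum over $q$ and collect.
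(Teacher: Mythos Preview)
Your overall architecture is correct and coincides with what the paper (via the cited results of Bui and Michel--VanderKam) does: split $S_1$ over the three mollifier pieces, insert the approximate functional equation, apply even-primitive-character orthogonality, and read off the diagonal. The $\psi_{\text{IS}}$ and $\psi_{\text{MV}}$ pieces are described accurately.

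There is, however, a genuine error in your treatment of the $\psi_{\text{B}}$ piece. You claim that after orthogonality one is left evaluating
\[
\frac{1}{L}\sum_{bc\le y_2}\frac{\Lambda(b)\mu(c)}{bc}\,P_2[bc],
\]
but this sum is $o(1)$, not $\tfrac{\theta_2}{2}\widetilde{P_2}(1)$. Indeed $\Lambda*\mu(n)=-\mu(n)\log n$, so the sum equals $-\tfrac{1}{L}\sum_{n\le y_2}\tfrac{\mu(n)\log n}{n}P_2[n]$, and $\sum_n \mu(n)\log n/n$ converges (its Dirichlet series $(1/\zeta)'(s)$ is regular at $s=1$). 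The point is that the $n$-variable from $L(\tfrac12,\chi)=\sum_n\chi(n)n^{-1/2}\,(\cdots)$ does not disappear: orthogonality on $\chi(nc)\overline{\chi}(b)$ forces the diagonal $nc=b$, not $bc=1$. With $b$ a prime power and $c$ squarefree, the two leading diagonal cases are $c=1,\ n=b$ (contributing $\theta_2\widetilde{P_2}(1)$ via $\tfrac{1}{L}\sum_{b\le y_2}\Lambda(b)b^{-1}P_2[b]$) and $c=b=p,\ n=1$ (contributing $-\tfrac{\theta_2}{2}\widetilde{P_2}(1)$ via $-\tfrac{1}{L}\sum_{p\le y_2^{1/2}}(\log p)p^{-1}P_2[p^2]$); their sum is $\tfrac{\theta_2}{2}\widetilde{P_2}(1)$. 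So the factor $\tfrac12$ comes from this cancellation between the $c=1$ and $c=p$ diagonals, and has nothing to do with $\varphi^+(q)=\tfrac12\varphi^*(q)+O(1)$: the statement is already normalised by $\varphi^+(q)$, so that relation cannot introduce an extra $\tfrac12$.
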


\begin{lemma}\label{lem: eval of S2}
Let $0 <\theta_1,\theta_2,\theta_3 < \frac{1}{2}$ with $\theta_2 < \theta_1,\theta_3$. Then
\begin{align*}
S_2 &= \left( 2P_1(1)P_3(1) + P_3(1)^2 + \frac{1}{\theta_3}\int_0^1 P_3'(x)^2 dx + \kappa + \lambda + o(1) \right)\sum_{q} \Psi \left( \frac{q}{Q}\right) \frac{q}{\varphi(q)} \varphi^+(q),
\end{align*}
where
\begin{align*}
\kappa = 3\theta_2 P_3(1)\widetilde{P_2}(1) - 2\theta_2 \int_0^1 P_2(x)P_3(x)dx
\end{align*}
and
\begin{align*}
\lambda &= P_1(1)^2 + \frac{1}{\theta_1}\int_0^1 P_1'(x)^2 dx - \theta_2 P_1(1) \widetilde{P_2}(1) + 2\theta_2\int_0^1 P_1\left(1 - \frac{\theta_2(1-x)}{\theta_1}\right) P_2(x) dx \\
&+ \frac{\theta_2}{\theta_1} \int_0^1 P_1'\left(1 - \frac{\theta_2(1-x)}{\theta_1}\right) P_2(x) dx + \theta_2^2\int_0^1 (1-x)P_2(x)^2 dx \\ 
&+ \frac{\theta_2}{2}\int_0^1 (1-x)^2 P_2'(x)^2 dx - \frac{\theta_2^2}{4} \widetilde{P_2}(1)^2 + \frac{\theta_2}{4}\int_0^1 P_2(x)^2 dx.
\end{align*}
\end{lemma}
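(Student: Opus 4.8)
The plan is to evaluate $S_2$ by combining an approximate functional equation for $L(\tfrac12,\chi)$ with the nine-term expansion of $|\psi(\chi)|^2$, and then sorting the resulting pieces according to how many root numbers survive cancellation. Writing $L(\tfrac12,\chi) = A(\chi) + \epsilon(\chi)\overline A(\chi)$, where $A(\chi) = \sum_m\frac{\chi(m)}{\sqrt m}V\!\left(\frac{m}{\sqrt q}\right)$ for a fixed smooth rapidly-decaying $V$ with $V(0)=1$ and $\overline A$ is the corresponding sum in $\overline\chi$, we have
\[
\left|L\left(\tfrac12,\chi\right)\right|^2 = 2A(\chi)\overline A(\chi) + \overline{\epsilon(\chi)}A(\chi)^2 + \epsilon(\chi)\overline A(\chi)^2 .
\]
Since $\psi_{\text{MV}}(\chi) = \overline{\epsilon(\chi)}\,\widetilde M_3(\chi)$ with $\widetilde M_3$ a Dirichlet polynomial in $\overline\chi$, while $\psi_{\text{IS}}$ and $\psi_{\text{B}}$ carry no root number, every term of $|L(\tfrac12,\chi)|^2\,\psi_i(\chi)\overline{\psi_j(\chi)}$ is a fixed Dirichlet polynomial in $\chi,\overline\chi$ times a power $\epsilon(\chi)^a$ with $a\in\{-2,-1,0,1,2\}$ (reading $\overline{\epsilon(\chi)}$ as $\epsilon(\chi)^{-1}$). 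The terms with $a=0$ will produce the main term and those with $a\neq 0$ the error. The contributions from the pairs $(i,j)$ not involving $\psi_{\text{B}}$ are what Lemma \ref{lem: main term IS MV} evaluates, and those involving $\psi_{\text{B}}$ are what Lemma \ref{lem: main term B MV} evaluates; adding the two yields the asserted formula for $S_2$, with $\kappa$ built from the $\psi_{\text{B}}\overline{\psi_{\text{MV}}}$ main term and $\lambda$ from the $\psi_{\text{IS}}$- and $\psi_{\text{B}}$-related main terms.

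For the terms with $a=0$ I would interchange all summations, apply orthogonality of the even primitive characters modulo $q$, and isolate the diagonal. Because $V$ confines $m,n$ to $\ll\sqrt q\,Q^\epsilon$ and the mollifier variables to $\leq Q^{\max_i\theta_i}$, for $\theta_i<\tfrac12$ every off-diagonal contribution reduces to a plain congruence with short entries and is $O(Q^{-\delta})$ times the right-hand side of the lemma, hence absorbed into $o(1)$ with no averaging over $q$ (as anticipated in section \ref{sec: mollify}). The diagonal is then a Dirichlet-polynomial sum constrained by a multiplicative identity and weighted by $V$ and the $P_i$, summed over $q$ against $\Psi(q/Q)\frac{q}{\varphi(q)}\varphi^+(q)$; I would evaluate it by Mellin inversion, representing $V$ and the $P_i$ by contour integrals so that the inner sum factors into a product of shifted Riemann zeta functions times an absolutely convergent Euler product, and then collecting residues at the poles of the $\zeta(1+\cdot)$ factors after shifting the contours left. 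This produces $P_i(1)$, $\int_0^1 P_i'(x)^2\,dx$, $\int_0^1 P_1\!\left(1-\tfrac{\theta_2(1-x)}{\theta_1}\right)P_2(x)\,dx$, and the other integrals occurring in $\kappa$ and $\lambda$: the $(\psi_{\text{IS}},\psi_{\text{IS}})$ and $(\psi_{\text{MV}},\psi_{\text{MV}})$ pairs reproduce the familiar shape $P_i(1)^2+\tfrac1{\theta_i}\int_0^1 P_i'(x)^2\,dx$, and the $(\psi_{\text{IS}},\psi_{\text{MV}})$ pair contributes $2P_1(1)P_3(1)$. The pairs involving $\psi_{\text{B}}$ require the longer analysis of section \ref{sec: MV B main term}, because the factor $\Lambda(b)$ in $\psi_{\text{B}}$ produces $-\zeta'/\zeta$ and hence double poles, and because the stated shape only emerges under the nesting $\theta_2<\theta_1,\theta_3$.

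For the terms with $a\neq0$ I would open each Gauss sum through $\tau(\chi)\overline\chi(n) = \sum_{h(q)}\chi(h)e(hn/q)$; after orthogonality the $\chi$-sum collapses to an incomplete Kloosterman sum when $|a|=1$ and a complete Kloosterman sum when $|a|=2$, of modulus $q$, with arguments built from the mollifier variables and one of $m,n$. Summing over $q$ against $\Psi(q/Q)$ recasts these as sums of Kloosterman sums of exactly the type handled by Deshouillers and Iwaniec, and I would finish by invoking Lemma \ref{lem: sum of kloos sum}. The hypotheses $0<\theta_1,\theta_2,\theta_3<\tfrac12$ with $\theta_2<\theta_1,\theta_3$ are precisely what these estimates need: $\theta_i<\tfrac12$ keeps the lengths of $m,n$ (of size $\sqrt q$) and of the mollifier variables small enough for the Kloosterman bound to beat the main term despite the $q^{-1/2}$ loss from each opened Gauss sum, and $\theta_2<\theta_1,\theta_3$ keeps the $\psi_{\text{B}}$-variable $bc$ from being the longest argument in a cross term.

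The step I expect to be the main obstacle is the error analysis of the $(\psi_{\text{B}},\psi_{\text{MV}})$ and $(\psi_{\text{MV}},\psi_{\text{B}})$ terms in section \ref{sec: B MV error}, where the double $\Lambda\mu$ structure of $\psi_{\text{B}}$ collides with the Gauss sum from $\psi_{\text{MV}}$ to give the most delicate sum of Kloosterman sums in the paper; it is this term that forces the strict inequalities among the $\theta_i$. A secondary, largely organizational, hurdle is carrying the $y_i$-dependent polynomials $P_i[x]$ accurately through the residue calculus, especially where the double poles from $\psi_{\text{B}}$ appear.
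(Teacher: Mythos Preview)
Your overall plan---extract the main term from the diagonal and bound the off-diagonal via Deshouillers--Iwaniec---is correct in spirit, but your organization differs from the paper's in two important ways, and your description of what Lemmas \ref{lem: main term IS MV} and \ref{lem: main term B MV} do is not accurate.

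First, the paper does not expand $|\psi|^2$ into nine terms and sort by powers of $\epsilon(\chi)$. It groups as
\[
|\psi|^2 = |\psi_{\text{IS}} + \psi_{\text{B}}|^2 + 2\,\mathrm{Re}\bigl\{\psi_{\text{IS}}\,\psi_{\text{MV}}(\overline{\chi}) + \psi_{\text{B}}\,\psi_{\text{MV}}(\overline{\chi})\bigr\} + |\psi_{\text{MV}}|^2.
\]
The first block---which is where the entire expression $\lambda$ comes from---is quoted wholesale from Bui \cite[Theorem 2.2]{Bui}; it requires no averaging over $q$, and it is here (not in the Kloosterman analysis, as you suggest) that the hypothesis $\theta_2<\theta_1$ is actually used. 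The third block simplifies immediately because $|\epsilon(\chi)|^2=1$, so $|L(\tfrac12,\chi)\psi_{\text{MV}}(\chi)|^2$ is literally an Iwaniec--Sarnak mollified second moment with polynomial $P_3$, giving $P_3(1)^2+\theta_3^{-1}\int_0^1 P_3'(x)^2\,dx$ with no new work. Only the two cross terms are genuinely new in this paper: Lemma \ref{lem: main term IS MV} handles the single pair $\psi_{\text{IS}}\,\psi_{\text{MV}}(\overline{\chi})$ (producing $P_1(1)P_3(1)$, doubled by the $2\,\mathrm{Re}$), and Lemma \ref{lem: main term B MV} handles the single pair $\psi_{\text{B}}\,\psi_{\text{MV}}(\overline{\chi})$ (producing $\kappa/2$, then doubled). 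Your claim that these two lemmas cover, respectively, \emph{all} non-$\psi_{\text{B}}$ pairs and \emph{all} $\psi_{\text{B}}$ pairs is wrong; in particular the IS--IS, IS--B, and B--B pieces sit inside $\lambda$ and come from the citation to Bui, not from either lemma.

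Second, for those two cross terms the paper represents $|L(\tfrac12,\chi)|^2$ by the approximate functional equation of Lemma \ref{lem: approx func eq}, which is a single bilinear sum in $\chi(m)\overline{\chi}(n)$ and contains \emph{no} root number. It does not square the first-moment representation $A+\epsilon\overline A$. Hence each cross term carries exactly one factor of $\epsilon(\chi)$, coming from $\psi_{\text{MV}}$; this is opened via \eqref{eq: defn of epsilon root number} and collapses after character orthogonality to a single cosine with denominator $w\mid q$ (equation \eqref{eq: eps char orth}). There are never $\epsilon^{\pm2}$ terms, and so no products of Gauss sums or complete Kloosterman sums arise. Your proposed sorting by $a\in\{-2,-1,0,1,2\}$ would force you to confront such terms, and you have not explained how Lemma \ref{lem: sum of kloos sum} would apply to them; the paper's route avoids the issue entirely.
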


\begin{proof}[Proof of Theorem \ref{thm: main theorem}]
Lemmas \ref{lem: eval of S1} and \ref{lem: eval of S2} give the evaluations of $S_1$ and $S_2$ for even characters. The identical formulas hold for odd characters. Theorem \ref{thm: main theorem} then follows from \eqref{eq: cauchy for nonvanishing} upon choosing $\theta_1 =\theta_3= \frac{1}{2}$, $\theta_2 = 0.163$, and
\begin{align*}
P_1(x) &= 4.86x + 0.29x^2 - 0.96x^3 + 0.974x^4 - 0.17x^5, \\
P_2(x) &= -3.11x - 0.3x^2 + 0.87x^3 - 0.18x^4 - 0.53x^5, \\
P_3(x) &= 4.86x + 0.06x^2.
\end{align*}
These choices actually yield a proportion\footnote{\label{foot: mathematica notebook}A Mathematica\textregistered\ file with this computation is included with this paper on \text{arxiv.org}.}
\begin{align*}
\geq 0.50073004\ldots,
\end{align*}
which allows us to state Theorem \ref{thm: main theorem} with a clean inequality.
\end{proof}

We note without further comment the curiosity in the proof of Theorem \ref{thm: main theorem} that the largest permissible value of $\theta_2$ is not optimal.

We can dispense with $S_1$ quickly.
\begin{proof}[Proof of Lemma \ref{lem: eval of S1}]
Apply \cite[Theorem 2.1]{Bui} and the argument of \cite[section 3]{MicVan}, using the facts $L = \log q + O(1)$ and $y_i = q^{\theta_i + o(1)}$.
\end{proof}

The analysis of $S_2$ is much more involved, and we devote the remainder of the paper to this task. We first observe that \eqref{eq: three piece mollifier} yields
\begin{align*}
|\psi(\chi)|^2 &= |\psi_{\text{IS}}(\chi) + \psi_{\text{B}}(\chi)|^2 + 2\text{Re}\left\{\psi_{\text{IS}}(\chi) \psi_{\text{MV}}(\overline{\chi}) + \psi_{\text{B}}(\chi) \psi_{\text{MV}}(\overline{\chi})\right\} + |\psi_{\text{MV}}(\chi)|^2.
\end{align*}
By \cite[Theorem 2.2]{Bui} we have
\begin{align*}
\sideset{}{^+}\sum_{\chi(q)} \left| L \left( \frac{1}{2},\chi \right)\right|^2|\psi_{\text{IS}}(\chi) + \psi_{\text{B}}(\chi)|^2 =\lambda \varphi^+(q) + O \left(q L^{-1+\epsilon} \right),
\end{align*}
where $\lambda$ is as in Lemma \ref{lem: eval of S2}. We also have
\begin{align*}
\frac{1}{\varphi^+(q)}\sideset{}{^+}\sum_{\chi(q)} \left| L \left( \frac{1}{2},\chi \right)\right|^2|\psi_{\text{MV}}(\chi)|^2 &= \frac{1}{\varphi^+(q)} \sideset{}{^+}\sum_{\chi(q)} \left| L \left( \frac{1}{2},\chi \right)\right|^2 \left|\sum_{\ell \leq y_3}\frac{\mu(\ell)\chi(\ell) P_3[\ell]}{\ell^{\frac{1}{2}}} \right|^2 \\
&= P_3(1)^2 + \frac{1}{\theta_3} \int_0^1 P_3'(x)^2 dx + O(L^{-1+\epsilon}),
\end{align*}
by the analysis of the Iwaniec-Sarnak mollifier (see \cite[section 2.3]{Bui}). 

Therefore, in order to prove Lemma \ref{lem: eval of S2} it suffices to prove the following two results.

\begin{lemma}\label{lem: main term IS MV}
For $0 < \theta_1,\theta_3 < \frac{1}{2}$ we have
\begin{align*}
\sum_{q} &\Psi \left( \frac{q}{Q}\right) \frac{q}{\varphi(q)}\sideset{}{^+}\sum_{\chi(q)} \left| L \left( \frac{1}{2},\chi \right)\right|^2 \psi_{\text{IS}}(\chi) \psi_{\text{MV}}(\overline{\chi}) \\ 
&= (P_1(1)P_3(1) + o(1))\sum_{q} \Psi \left( \frac{q}{Q}\right) \frac{q}{\varphi(q)} \varphi^+(q). \nonumber
\end{align*}
\end{lemma}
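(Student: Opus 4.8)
The plan is to open up all the Dirichlet series and the root number, perform the average over even primitive characters $\chi \pmod q$, and then the average over $q$. Writing $L(\tfrac12,\chi)^2$ via the approximate functional equation (or, since we are multiplying by $\psi_{\text{MV}}(\overline\chi)$ which already carries a root number $\epsilon(\chi)$, it may be cleaner to use one long smoothed Dirichlet series for $|L(\tfrac12,\chi)|^2 = L(\tfrac12,\chi)L(\tfrac12,\overline\chi)$), we expand
\[
\left| L \left( \tfrac{1}{2},\chi \right)\right|^2 \psi_{\text{IS}}(\chi) \psi_{\text{MV}}(\overline{\chi})
\]
into a sum over integers $m,n,\ell_1,\ell_3$ (with weights from $\mu$, the polynomials $P_1,P_3$, and smooth cutoffs) of $\chi(m\ell_1)\overline\chi(n\ell_3)$ times one surviving copy of the root number $\epsilon(\chi)$ coming from $\psi_{\text{MV}}(\overline\chi)$. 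Recalling $\epsilon(\chi) = q^{-1/2}\sum_{h(q)}\chi(h)e(h/q)$, the character sum $\sideset{}{^+}\sum_{\chi(q)}\chi(h m\ell_1)\overline\chi(n\ell_3)$ can be evaluated by orthogonality: it picks out $hm\ell_1 \equiv \pm n\ell_3 \pmod q$, contributing $\tfrac12\varphi^+(q)$-ish weight, up to the usual Möbius-over-divisors sieving to restrict to primitive characters.

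The diagonal-type contribution, where $hm\ell_1 = n\ell_3$ exactly (forced, roughly, when all variables are $\le q^{1/2}$, which they are since $\theta_1,\theta_3<\tfrac12$ and the AFE lengths are $\approx q^{1/2}$), produces the main term. After summing over $h$ this becomes a standard multiplicative sum; the expected outcome is that the $\ell_1$-sum against $P_1[\ell_1]$ collapses (as in the Iwaniec–Sarnak analysis) to $P_1(1)$, the $\ell_3$-sum against $P_3[\ell_3]$ collapses to $P_3(1)$, and the $m,n$ sums from $|L|^2$ contribute a factor $1$, so that — after restoring the $q$-average with weight $\Psi(q/Q)\,q/\varphi(q)$ and the primitivity sieve — one gets exactly $(P_1(1)P_3(1)+o(1))\sum_q \Psi(q/Q)\tfrac{q}{\varphi(q)}\varphi^+(q)$. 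This part requires no averaging over $q$; it is essentially bookkeeping with Mellin/Perron integrals and contour shifts to read off the polynomial evaluations, exactly parallel to the main-term extraction promised for Lemma \ref{lem: main term IS MV} in section \ref{sec: IS MV main term}, and closely modeled on \cite[section 2.3]{Bui} and \cite{MicVan}.

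The main obstacle is the off-diagonal: the terms with $hm\ell_1 \equiv \pm n\ell_3 \pmod q$ but $hm\ell_1 \ne \pm n\ell_3$. Here the presence of the root number is crucial — after detecting the congruence and executing the $h$-sum one is left with Kloosterman-type sums (or Ramanujan sums $c_q(\cdot)$), and then summing over the moduli $q$ against $\Psi(q/Q)$ converts these into sums of Kloosterman sums to which the Deshouillers–Iwaniec bounds (Lemma \ref{lem: sum of kloos sum}) apply. I expect the analysis to hinge on arranging the variables so that the relevant Kloosterman sum has modulus and frequencies in the admissible ranges; the fact that $\theta_1,\theta_3<\tfrac12$ is exactly what keeps the ``length'' of the Kloosterman sum below the modulus, giving a power saving and hence an $o(1)$ relative error. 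So the strategy is: (i) expand and apply character orthogonality; (ii) split into diagonal and off-diagonal; (iii) extract the diagonal main term by Mellin calculus as in section \ref{sec: IS MV main term}; (iv) bound the off-diagonal via Lemma \ref{lem: sum of kloos sum} after averaging over $q$, as will be carried out in section \ref{sec: IS MV error}. The Kloosterman-sum estimate in step (iv) is the hard part; everything else is routine, if lengthy.
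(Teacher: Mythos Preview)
Your high-level plan (expand, orthogonality, split, Deshouillers--Iwaniec) is the right shape, but the way you identify the main term is not correct, and this is where the argument would stall. First a bookkeeping point: since $\psi_{\text{MV}}(\overline\chi)=\epsilon(\chi)\sum_{\ell_3}\mu(\ell_3)\chi(\ell_3)\ell_3^{-1/2}P_3[\ell_3]$, the character content is $\epsilon(\chi)\,\chi(m\ell_1\ell_3)\,\overline\chi(n)$, so $\ell_3$ sits with $m,\ell_1$, not with $n$. More importantly, once you open $\epsilon(\chi)$ and sum over $h\pmod q$, the $h$-sum does not leave a ``diagonal'' $hm\ell_1\ell_3=n$; it closes up into the Kloosterman-type expression
\[
\sideset{}{^+}\sum_{\chi(q)}\epsilon(\chi)\chi(m\ell_1\ell_3)\overline\chi(n)=\frac{1}{q^{1/2}}\sum_{\substack{vw=q\\(v,w)=1}}\mu^2(v)\varphi(w)\cos\!\left(\frac{2\pi n\,\overline{m\ell_1\ell_3 v}}{w}\right),
\]
and the correct splitting is $m\ell_1\ell_3=1$ versus $m\ell_1\ell_3\neq 1$. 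The condition $m\ell_1\ell_3=1$ immediately produces the factors $P_1(1)P_3(1)$, but the remaining $n$-sum is \emph{not} a tautological factor of $1$: it is a genuine first-moment quantity $\sideset{}{^+}\sum_{\chi}\epsilon(\chi)\sum_n \overline\chi(n)n^{-1/2}V(n/q)$. The paper handles this via a functional-equation identity (Proposition~\ref{prop: change V for F}) that converts $V$ into a rapidly decaying $F$, after which one shows the surviving contribution is $2\varphi^+(q)\cdot\tfrac12=\varphi^+(q)$ using $F(1)=\tfrac12$. Your description (``$m,n$ sums contribute a factor $1$'') skips this, and without it you cannot get the constant right.

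For the off-diagonal $m\ell_1\ell_3\neq 1$, ``apply Lemma~\ref{lem: sum of kloos sum}'' is the endpoint but not the whole story. Before Deshouillers--Iwaniec can be invoked, the paper (i) reduces the $v$-variable from the $vw=q$ decomposition to size $\le Q^\epsilon$ by reverting to multiplicative characters, (ii) uses Poisson summation in $n$ and Ramanujan-sum cancellation to force $N\le MQ^{-2\epsilon}$ (Proposition~\ref{prop: cancellation in n}), (iii) applies the reciprocity $\overline x/y+\overline y/x\equiv 1/(xy)\pmod 1$ to flip the modulus, and (iv) removes residual coprimality conditions by M\"obius with control on the new divisor variable. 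Only then is the sum in a shape where Lemma~\ref{lem: sum of kloos sum} applies with parameters small enough to win. Your sketch does not anticipate these preparatory steps, and in particular the reduction of $N$ is essential: without it the Kloosterman bound does not give a power saving in the full range $\theta_1,\theta_3<\tfrac12$.
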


\begin{lemma}\label{lem: main term B MV}
Let $0 < \theta_2 < \theta_3 < \frac{1}{2}$. Then
\begin{align*}
\sum_{q} &\Psi \left( \frac{q}{Q}\right) \frac{q}{\varphi(q)}\sideset{}{^+}\sum_{\chi(q)} \left| L \left( \frac{1}{2},\chi \right)\right|^2 \psi_{\text{B}}(\chi) \psi_{\text{MV}}(\overline{\chi}) \\ 
&= \left(\frac{3\theta_2}{2} P_3(1)\widetilde{P_2}(1) - \theta_2 \int_0^1 P_2(x)P_3(x)dx + o(1)\right)\sum_{q} \Psi \left( \frac{q}{Q}\right) \frac{q}{\varphi(q)} \varphi^+(q).
\end{align*}
\end{lemma}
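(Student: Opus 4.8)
The plan is to strip the root number off the weight by means of the functional equation, reducing the sum to one with ordinary Dirichlet characters that can be handled by the approximate functional equation and orthogonality, much as in Lemma \ref{lem: main term IS MV}; the only new feature is the bookkeeping forced by the double sum defining $\psi_{\text{B}}$.

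The starting point is that for an even primitive character $\chi \pmod q$ the functional equation gives $\epsilon(\chi) L(\tfrac12, \overline\chi) = L(\tfrac12, \chi)$. Since $\psi_{\text{MV}}(\overline\chi) = \epsilon(\chi) \sum_{\ell \le y_3} \mu(\ell)\chi(\ell)\ell^{-1/2} P_3[\ell]$, this yields
\begin{align*}
\left| L(\tfrac12,\chi) \right|^2 \psi_{\text{MV}}(\overline\chi) &= \epsilon(\chi) \left| L(\tfrac12,\chi) \right|^2 \sum_{\ell \le y_3} \frac{\mu(\ell)\chi(\ell)}{\ell^{1/2}} P_3[\ell] \\
&= L(\tfrac12,\chi)^2 \sum_{\ell \le y_3} \frac{\mu(\ell)\chi(\ell)}{\ell^{1/2}} P_3[\ell].
\end{align*}
So the quantity in the lemma equals $\sum_q \Psi(q/Q) \tfrac{q}{\varphi(q)} \sideset{}{^+}\sum_{\chi(q)} L(\tfrac12,\chi)^2 \psi_{\text{B}}(\chi) \sum_{\ell} \mu(\ell)\chi(\ell)\ell^{-1/2} P_3[\ell]$. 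I would then insert the approximate functional equation for $L(\tfrac12,\chi)^2$, writing it as a principal sum $\sum_n d(n)\chi(n) n^{-1/2} V(n/q)$ plus a dual sum carrying a factor $\epsilon(\chi)^2$, where $V$ is a fixed smooth function with $V(0)=1$ and rapid decay, so that $n$ is effectively cut off at $q^{1+\varepsilon}$.

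For the principal sum I would open $\psi_{\text{B}}(\chi) = L^{-1} \sum_{bc \le y_2} \Lambda(b)\mu(c)\overline\chi(b)\chi(c)(bc)^{-1/2} P_2[bc]$, so the character factor is $\chi(nc\ell)\overline\chi(b)$, and evaluate the sum over even primitive characters by orthogonality. This leaves the congruence $nc\ell \equiv \pm b \pmod q$ with $(nbc\ell, q) = 1$, together with the standard Möbius sum over divisors of $q$ coming from primitivity. The diagonal $nc\ell = b$ supplies the main term: since $\Lambda(b)$ forces $b$ to be a prime power, $nc\ell = b$ forces $n, c, \ell$ to be powers of one prime, and after using $\mu(p^e) \in \{0,\pm 1\}$ and $d(p^k) = k+1$ the sum collapses to a sum over primes, which one evaluates by Mertens' theorem (equivalently, by shifting contours in a Dirichlet series in the $P_i$-variables). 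The factor $L^{-1}$ in $\psi_{\text{B}}$ cancels a $\log y_2 = \theta_2 L$ produced by the prime sum, the hypothesis $\theta_2 < \theta_3$ makes $bc \le y_2$ the binding truncation, and the resulting expression is the one asserted in the lemma. Along the way one checks that replacing $V(n/q)$ by $1$, and removing the coprimality conditions, each cost only $o(1)$ times $\sum_q \Psi(q/Q)\tfrac{q}{\varphi(q)}\varphi^+(q)$; these are routine.

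Everything else is an error term, and handling it is the main obstacle. The off-diagonal part of the principal sum, where $nc\ell \equiv \pm b \pmod q$ but $nc\ell \ne b$, I would detect with additive characters and then transform the sum over $n$ by a Voronoi-type formula; this brings in Ramanujan sums $c_q(\cdot)$, and the bound $|c_q(m)| \le (q,m)$ together with the average over $q$ shows this part is $o(1)$ relative to the normalizing sum (this is, in essence, Proposition \ref{prop: cancellation in n}). The dual sum, carrying $\epsilon(\chi)^2 = \tau(\chi)^2/q$, I would treat by opening the Gauss sums via $\tau(\chi)\overline\chi(r) = \sum_h \chi(h) e(hr/q)$ and then executing the character sum; one is then left with a sum over the moduli $q$ of Kloosterman sums, weighted by $\Psi(q/Q)$ and by smooth functions of $n, b, c, \ell$, and the Deshouillers--Iwaniec estimate of Lemma \ref{lem: sum of kloos sum} shows it too is $o(1)$. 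The delicate point throughout is that, without the average over $q$, these Kloosterman sums would only be controllable for $\theta_2, \theta_3$ far below $\tfrac12$; the real work is to arrange them---via a smooth dyadic decomposition, Mellin separation of the variables $n, b, c, \ell$, and, where needed, reciprocity to normalize the moduli---so that Lemma \ref{lem: sum of kloos sum} applies with enough saving to cover the whole range $\theta_2 < \theta_3 < \tfrac12$, which is presumably exactly the range these estimates permit. Assembling the main term with these error bounds and summing against $\Psi(q/Q)$ then gives the lemma.
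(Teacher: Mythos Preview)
Your approach is genuinely different from the paper's, and the difference is worth spelling out.

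The paper does \emph{not} absorb the root number via the functional equation. Instead it keeps the single factor $\epsilon(\chi)$ coming from $\psi_{\text{MV}}(\overline\chi)$, applies the approximate functional equation for $|L(\tfrac12,\chi)|^2$ (Lemma~\ref{lem: approx func eq}), and then opens $\epsilon(\chi)$ via \eqref{eq: eps char orth} to obtain a sum with $\cos\bigl(2\pi bn\,\overline{c\ell m v}/w\bigr)$. The main term is extracted from the cases $c\ell m\mid b$ (split into $c\ell m=1$; $c\ell m=p$, $b=p$; and higher prime powers), each time using Proposition~\ref{prop: change V for F} to convert the remaining $n$-sum back into a central value plus a rapidly decaying piece. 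The error term, where $c\ell m\nmid b$, is then a pure exponential sum $e\bigl(bn\,\overline{c\ell m v}/w\bigr)$ that fits directly into the framework of Section~\ref{sec: IS MV error}: reciprocity, then Lemma~\ref{lem: sum of kloos sum}.

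Your route---rewriting $|L(\tfrac12,\chi)|^2\psi_{\text{MV}}(\overline\chi)=L(\tfrac12,\chi)^2\sum_\ell\mu(\ell)\chi(\ell)\ell^{-1/2}P_3[\ell]$ and using an approximate functional equation for $L(\tfrac12,\chi)^2$---does give a cleaner path to the main term (the diagonal $nc\ell=b$ via plain orthogonality, no $\epsilon(\chi)$ to unwind). But your error analysis is heavier than you indicate, in two respects. First, the off-diagonal of your principal sum now carries the divisor function $d(n)$; Proposition~\ref{prop: cancellation in n} treats a smooth $n$-sum against a bare exponential and does not apply here. You would need to open $d(n)$ as a convolution and Poisson-sum one factor (or run a genuine Voronoi for $d(n)$), which produces Kloosterman sums rather than the Ramanujan sums you mention. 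Second, your dual sum carries $\epsilon(\chi)^2=\tau(\chi)^2/q$; opening both Gauss sums and executing the $\chi$-sum yields honest Kloosterman sums $S(\,\cdot\,,\,\cdot\,;w)$, not the single exponentials $e(\,\cdot\,/w)$ that appear in the paper's error. Both pieces are in principle within reach of Deshouillers--Iwaniec, but neither is the situation your sketch describes.

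In short: your reduction is valid and the main-term bookkeeping is correct in outline, but the paper's choice to retain a single $\epsilon(\chi)$ is not accidental---it arranges for the error to be a pure exponential sum from the start, so that the machinery of Section~\ref{sec: IS MV error} can be recycled essentially verbatim. Your approach trades a simpler main term for a noticeably more intricate error term.
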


\section{Lemma \ref{lem: main term IS MV} main term}\label{sec: IS MV main term}

The goal of this section is to extract the main term in Lemma \ref{lem: main term IS MV}. The main term analysis is given in \cite[section 6]{MicVan}, but as the ideas also appear in the proof of Lemma \ref{lem: main term B MV} we give details here.

We begin with two lemmas.

\begin{lemma}\label{lem: approx func eq}
Let $\chi$ be a primitive even character modulo $q$. Let $G(s)$ be an even polynomial satisfying $G(0) = 1$, and which vanishes to second order at $\frac{1}{2}$. Then we have
\begin{align*}
\left| L \left( \frac{1}{2},\chi \right) \right|^2 &= 2 \mathop{\sum \sum}_{m,n} \frac{\chi(m) \overline{\chi}(n)}{(mn)^{\frac{1}{2}}} V \left(\frac{mn}{q} \right),
\end{align*}
where
\begin{align}\label{eq: defn of V}
V(x) &= \frac{1}{2\pi i}\int_{(1)} \frac{\Gamma^2 \left( \frac{s}{2} + \frac{1}{4}\right)}{\Gamma^2 \left( \frac{1}{4}\right)} \frac{G(s)}{s} \pi^{-s}x^{-s}ds.
\end{align}
\end{lemma}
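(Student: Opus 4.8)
The plan is to derive Lemma \ref{lem: approx func eq} from the standard approximate functional equation for $|L(\frac{1}{2},\chi)|^2$, which is itself obtained by a contour-integration argument. First I would consider the completed $L$-function $\Lambda(s,\chi) = (q/\pi)^{s/2} \Gamma(\frac{s}{2} + \frac{\mathfrak{a}}{2}) L(s,\chi)$, where $\mathfrak{a} = 0$ since $\chi$ is even, and recall the functional equation $\Lambda(s,\chi) = \epsilon(\chi) \Lambda(1-s,\overline{\chi})$. The product $\Lambda(s,\chi)\Lambda(s,\overline{\chi})$ then satisfies a clean functional equation relating $s$ to $1-s$ with root number $\epsilon(\chi)\epsilon(\overline{\chi}) = 1$ (the roots numbers being complex conjugates of modulus one for even $\chi$). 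This symmetry is what makes the weight function $V$ come out without any root-number factor.

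The main computation is then the contour shift. I would start from the integral
\begin{align*}
I = \frac{1}{2\pi i} \int_{(1)} \Lambda(\tfrac{1}{2}+s, \chi) \Lambda(\tfrac{1}{2}+s, \overline{\chi}) \frac{G(s)}{s} \left(\frac{\pi}{q}\right)^{s} \frac{\Gamma^2(\frac{1}{4})}{\Gamma^2(\frac{s}{2}+\frac{1}{4})} \, ds,
\end{align*}
normalized so that expanding $L(\frac{1}{2}+s,\chi)L(\frac{1}{2}+s,\overline{\chi}) = \sum_{m,n} \chi(m)\overline{\chi}(n) (mn)^{-1/2-s}$ on the line $\mathrm{Re}(s) = 1$ gives exactly $\sum_{m,n} \frac{\chi(m)\overline{\chi}(n)}{(mn)^{1/2}} V(mn/q)$ with $V$ as in \eqref{eq: defn of V}. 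The factor $G(s)/s$ has a simple pole at $s = 0$ with residue $G(0) = 1$, contributing $|L(\frac{1}{2},\chi)|^2$ (here the double zero of $G$ at $s = \frac{1}{2}$, together with $G$ being even, ensures $G$ has no pole interfering and kills any pole of $\Gamma^2(\frac{s}{2}+\frac{1}{4})^{-1}$ — actually that factor is entire — while the vanishing at $\pm\frac{1}{2}$ is the standard device to cancel the trivial zeros/poles of the Gamma factors in the ratio, keeping the integrand holomorphic away from $s=0$). Then I move the contour from $\mathrm{Re}(s) = 1$ to $\mathrm{Re}(s) = -1$, picking up only the residue at $s = 0$, and on the new line substitute $s \mapsto -s$ and apply the functional equation $\Lambda(\frac{1}{2}+s,\chi)\Lambda(\frac{1}{2}+s,\overline{\chi}) = \Lambda(\frac{1}{2}-s,\overline{\chi})\Lambda(\frac{1}{2}-s,\chi)$, together with $G(-s) = G(s)$, to see that the shifted integral equals $-I$ (the sign from $1/s \mapsto -1/s$), hence $2I$ equals the residue $|L(\frac{1}{2},\chi)|^2$. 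Expanding $I$ into its Dirichlet series on $\mathrm{Re}(s) = 1$ and interchanging sum and integral (justified by absolute convergence, since $V(x)$ decays rapidly as $x \to \infty$ by moving the contour far right, using the rapid decay of $\Gamma^2(\frac{s}{2}+\frac{1}{4})$) yields the stated formula with the factor $2$.

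The one genuine point requiring care — the ``hard part,'' though it is really just bookkeeping — is verifying that the ratio of Gamma factors $\Gamma^2(\frac{s}{2}+\frac{1}{4})/\Gamma^2(\frac{1}{4})$ in $V$ together with the prefactors combines correctly after the functional-equation substitution, and that the auxiliary polynomial $G$ indeed makes the integrand holomorphic on the strip $-1 \le \mathrm{Re}(s) \le 1$ except for the simple pole at $s=0$: the Gamma factor $\Gamma(\frac{1}{2}+s, \chi)$-completion introduces $\Gamma(\frac{s}{2}+\frac{1}{4})$ from $\chi$ and another from $\overline{\chi}$, and when I write things in terms of $L$-functions rather than completed $L$-functions these reappear in $V$; one checks the poles of $\Gamma(\frac{s}{2}+\frac{1}{4})$ at $s = -\frac{1}{2}, -\frac{5}{2}, \dots$ lie outside the strip or are cancelled, so no spurious residues arise. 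I would also record that $V(x) = 1 + O(x^A)$ as $x\to 0$ for any $A<\frac{1}{2}$ by moving the contour slightly left of $0$ past the pole — but for the statement as given only the integral representation \eqref{eq: defn of V} is needed, so I would keep the proof to the contour shift plus the functional-equation symmetry argument, citing the standard references (e.g.\ the treatment in Iwaniec–Kowalski or in \cite{MicVan}) for the routine analytic details.
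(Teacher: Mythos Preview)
Your approach is correct and is exactly the standard contour-shift argument that the paper defers to by citing \cite[(2.5)]{IwaSar} and \cite[Theorem 5.3]{IwaKow}; the paper gives no independent proof. One small slip: the explicit integral you write for $I$ has the Gamma ratio inverted and carries an extra constant factor $(q/\pi)^{1/2}\Gamma^2(\tfrac{1}{4})$---the integral that actually unfolds to $\sum_{m,n}\chi(m)\overline{\chi}(n)(mn)^{-1/2}V(mn/q)$ is
\[
I=\frac{1}{2\pi i}\int_{(1)} L(\tfrac{1}{2}+s,\chi)L(\tfrac{1}{2}+s,\overline{\chi})\,\frac{\Gamma^2(\tfrac{s}{2}+\tfrac{1}{4})}{\Gamma^2(\tfrac{1}{4})}\,\frac{G(s)}{s}\Big(\frac{q}{\pi}\Big)^{s}ds,
\]
whose residue at $s=0$ is $|L(\tfrac{1}{2},\chi)|^2$ on the nose, and for which the double zero of $G$ at $s=\pm\tfrac{1}{2}$ cancels the double pole of $\Gamma^2(\tfrac{s}{2}+\tfrac{1}{4})$ at $s=-\tfrac{1}{2}$ (this is the genuine reason for that hypothesis, which your write-up identifies correctly in the later paragraph but muddles a bit in the first mention). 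With that correction the contour shift and the symmetry $s\mapsto -s$ go through exactly as you describe.
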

\begin{proof}
See \cite[(2.5)]{IwaSar}. The result follows along the lines of \cite[Theorem 5.3]{IwaKow}.
\end{proof}
We remark that $V$ satisfies $V(x) \ll_A (1+x)^{-A}$, as can be seen by moving the contour of integration to the right. We also note that the choice of $G(s)$ in Lemma \ref{lem: approx func eq} is almost completely free. In particular, we may choose $G$ to vanish at whichever finite set of points is convenient for us (see \eqref{eq: F has rapid decay} below for an application).

\begin{lemma}\label{lem: character orthogonality}
Let $(mn,q)=1$. Then
\begin{align*}
\sideset{}{^+}\sum_{\chi (q)} \chi(m) \overline{\chi}(n) = \frac{1}{2}\mathop{\sum \sum}_{\substack{vw = q \\ w \mid m \pm n}} \mu(v) \varphi(w).
\end{align*}
\end{lemma}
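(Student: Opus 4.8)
The plan is to start from the standard orthogonality relation for all primitive characters modulo $q$ and then pick out the even ones by averaging over $\chi$ and $\overline{\chi}$ (equivalently, by inserting the projector $\tfrac{1}{2}(1 + \chi(-1))$). Recall that the indicator of $\chi$ being even is $\tfrac{1}{2}(1+\chi(-1))$, so
\begin{align*}
\sideset{}{^+}\sum_{\chi(q)} \chi(m)\overline{\chi}(n) = \frac{1}{2}\sideset{}{^*}\sum_{\chi(q)} \chi(m)\overline{\chi}(n) + \frac{1}{2}\sideset{}{^*}\sum_{\chi(q)} \chi(-m)\overline{\chi}(n).
\end{align*}
Thus it suffices to understand $\sideset{}{^*}\sum_{\chi(q)} \chi(a)\overline{\chi}(n)$ for $(an,q)=1$, and then apply the result once with $a=m$ and once with $a=-m$, which produces the condition $w \mid m-n$ from the first term and $w\mid m+n$ from the second.

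The second step is to recall (or re-derive via Möbius inversion on the conductor) the formula for the sum over primitive characters. One has $\sum_{\chi(q)} \chi(a)\overline{\chi}(n) = \varphi(q)\,\mathbbm{1}_{a\equiv n\,(q)}$ for the sum over \emph{all} characters modulo $q$. Writing each character of modulus $q$ as induced from a primitive character of some modulus $d\mid q$, and using $\sum_{d \mid e}\mu(e/d)(\cdot)$ to invert, one gets for $(an,q)=1$
\begin{align*}
\sideset{}{^*}\sum_{\chi(q)} \chi(a)\overline{\chi}(n) = \sum_{\substack{d \mid q \\ a \equiv n \,(d)}} \mu\!\left(\frac{q}{d}\right)\varphi(d) = \sum_{\substack{vw = q \\ w \mid a-n}} \mu(v)\varphi(w),
\end{align*}
where in the last step I set $w=d$, $v=q/d$. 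Feeding this into the displayed identity of the first step, with $a=m$ giving $w\mid m-n$ and $a=-m$ giving $w\mid m+n$, and combining the two conditions into the single notation $w \mid m\pm n$, yields exactly the claimed formula with the overall factor $\tfrac12$.

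The only genuinely substantive point is the Möbius-inversion step relating the sum over primitive characters to the sum over all characters of modulus $d\mid q$; this is completely standard (it is the same identity that underlies the definition of $\varphi^*(q)=\sum_{d\mid q}\mu(q/d)\varphi(d)$, which appears implicitly in the paper). The rest is bookkeeping: checking that the coprimality hypothesis $(mn,q)=1$ guarantees $(an,q)=1$ for $a=\pm m$ so that the primitive-character formula applies, and verifying that no degenerate overlap between the $m-n$ and $m+n$ conditions causes trouble (it does not, since the two sums are simply added). I expect no real obstacle here; the main thing to be careful about is the precise normalization of the $\tfrac12$ factors so that the even-character projector and the Möbius inversion combine correctly.
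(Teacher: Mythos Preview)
Your proposal is correct and is precisely the standard argument: project onto even characters via $\tfrac12(1+\chi(-1))$, then use M\"obius inversion over the conductor to pass from all characters to primitive characters. The paper itself does not give a proof but simply cites \cite[Lemma 4.1]{BM}, whose proof follows exactly the route you describe; so your write-up is in fact more self-contained than the paper's.
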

\begin{proof}
See \cite[Lemma 4.1]{BM}, for instance.
\end{proof}

We do not need the averaging over $q$ in order to extract the main term of Lemma \ref{lem: main term IS MV}. We insert the definitions of the mollifiers $\psi_{\text{IS}}(\chi)$ and $\psi_{\text{MV}}(\overline{\chi})$, then apply Lemma \ref{lem: approx func eq} and interchange orders of summation. We obtain
\begin{align}\label{eq: ISMV all terms}
&\sideset{}{^+}\sum_{\chi(q)} \left| L \left( \frac{1}{2},\chi \right)\right|^2 \psi_{\text{IS}}(\chi) \psi_{\text{MV}}(\overline{\chi}) \\ 
&= 2\mathop{\sum \sum}_{\substack{\ell_1 \leq y_1 \\ \ell_3 \leq y_3 \\ (\ell_1\ell_3,q)=1}} \frac{\mu(\ell_1) \mu(\ell_3)P_1[\ell_1]P_3[\ell_3]}{(\ell_1\ell_3)^{\frac{1}{2}}} \mathop{\sum \sum}_{(mn,q)=1} \frac{1}{(mn)^{\frac{1}{2}}} V \left( \frac{mn}{q}\right) \sideset{}{^+}\sum_{\chi(q)}\epsilon(\chi) \chi(m\ell_1 \ell_3) \overline{\chi}(n). \nonumber
\end{align}
Opening $\epsilon(\chi)$ using \eqref{eq: defn of epsilon root number} and applying Lemma \ref{lem: character orthogonality}, we find after some work (see \cite[(3.4) and (3.7)]{IwaSar}) that
\begin{align}\label{eq: eps char orth}
\sideset{}{^+}\sum_{\chi(q)}\epsilon(\chi) \chi(m\ell_1 \ell_3) \overline{\chi}(n) &= \frac{1}{q^{1/2}}\mathop{\sum \sum}_{\substack{vw = q \\ (v,w)=1}} \mu^2(v)\varphi(w) \cos \left(\frac{2\pi n \overline{m \ell_1 \ell_3 v}}{w} \right).
\end{align}
The main term comes from $m\ell_1\ell_3 = 1$. With this constraint in place we apply character orthogonality in reverse, obtaining that the main term $M_{\text{IS,MV}}$ of Lemma  \ref{lem: main term IS MV} is
\begin{align*}
M_{\text{IS,MV}} &= 2P_1(1) P_3(1) \sideset{}{^+}\sum_{\chi(q)} \epsilon(\chi) \sum_n \frac{\overline{\chi}(n)}{n^{1/2}} V \left( \frac{n}{q}\right).
\end{align*}

We have the following proposition.
\begin{proposition}\label{prop: change V for F}
Let $\chi$ be a primitive even character modulo $q$, and let $T > 0$ be a real number. Let $V$ be defined as in \eqref{eq: defn of V}. Then
\begin{align*}
\sum_n \frac{\overline{\chi}(n)}{n^{\frac{1}{2}}} V \left( \frac{Tn}{q}\right) &= L \left( \frac{1}{2},\overline{\chi}\right) - \epsilon(\overline{\chi}) \sum_n \frac{\chi(n)}{n^{\frac{1}{2}}} F \left( \frac{n}{T} \right),
\end{align*}
where
\begin{align}\label{eq: defn of func F}
F(x) &= \frac{1}{2\pi i} \int_{(1)} \frac{\Gamma\left( \frac{s}{2} + \frac{1}{4}\right)\Gamma\left(- \frac{s}{2} + \frac{1}{4}\right)}{\Gamma^2\left(\frac{1}{4}\right)} \frac{G(s)}{s} x^{-s}ds.
\end{align}
\end{proposition}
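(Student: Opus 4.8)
The plan is to unfold $V$ into its defining Mellin integral, recognise the resulting inner sum as a shifted $L$-function, move the contour past $s=0$ to produce the term $L(\tfrac12,\overline{\chi})$, and then apply the functional equation of $L(s,\overline{\chi})$ to convert the remaining integral into the sum involving $F$.

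First I would insert \eqref{eq: defn of V} with $x = Tn/q$ and interchange $\sum_n$ with the $s$-integral; on the line $\mathrm{Re}(s)=1$ this is justified by absolute convergence, since $\sum_n n^{-3/2}<\infty$ and the ratio of Gamma factors decays exponentially on vertical lines by Stirling. Collapsing $\sum_n \overline{\chi}(n) n^{-1/2-s} = L(\tfrac12+s,\overline{\chi})$ gives
\[
\sum_n \frac{\overline{\chi}(n)}{n^{1/2}} V\!\left(\frac{Tn}{q}\right) = \frac{1}{2\pi i}\int_{(1)} \frac{\Gamma^2\!\left(\tfrac s2 + \tfrac14\right)}{\Gamma^2\!\left(\tfrac14\right)}\, \frac{G(s)}{s}\left(\frac{\pi T}{q}\right)^{-s} L\!\left(\tfrac12+s,\overline{\chi}\right) ds .
\]
Next I would shift the contour to $\mathrm{Re}(s) = -1$. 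Since $\overline{\chi}$ is primitive and non-principal, $L(\tfrac12+s,\overline{\chi})$ is entire, and $G$ is a polynomial; the only pole encountered in the strip $-1\le \mathrm{Re}(s)\le 1$ is the simple pole of $1/s$ at $s=0$, because the double pole of $\Gamma^2(\tfrac s2+\tfrac14)$ at $s=-\tfrac12$ is cancelled by the second-order zero of $G$ there --- here one uses that $G$ is even and vanishes to order two at $\tfrac12$. The residue at $s=0$ equals $L(\tfrac12,\overline{\chi})$ because $G(0)=1$, and the horizontal segments vanish by Stirling together with the polynomial (convexity) growth of $L$ in the critical strip.

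Then I would substitute $s\mapsto -s$ to return to the line $\mathrm{Re}(s)=1$, using $G(-s)=G(s)$ so that $\frac{G(-s)}{-s} = -\frac{G(s)}{s}$, and apply the functional equation for $L(s,\overline{\chi})$ in the form
\[
L\!\left(\tfrac12 - s,\overline{\chi}\right) = \epsilon(\overline{\chi})\left(\frac q\pi\right)^{s} \frac{\Gamma\!\left(\tfrac14 + \tfrac s2\right)}{\Gamma\!\left(\tfrac14 - \tfrac s2\right)}\, L\!\left(\tfrac12 + s,\chi\right),
\]
which is the standard functional equation for an even primitive character (with $\epsilon(\overline{\chi}) = \tau(\overline{\chi})/\sqrt q$), written at argument $\tfrac12 - s$. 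The powers of $\pi$, $q$ and $T$ then collapse to $T^{s}$, and the product of Gamma factors collapses to $\Gamma(\tfrac s2+\tfrac14)\Gamma(-\tfrac s2+\tfrac14)/\Gamma^2(\tfrac14)$. Expanding $L(\tfrac12+s,\chi)=\sum_n \chi(n)n^{-1/2-s}$ on $\mathrm{Re}(s)=1$, interchanging summation and integration once more, and recognising the $n$-th integral as $F(n/T)$ via \eqref{eq: defn of func F}, one arrives at exactly
\[
\sum_n \frac{\overline{\chi}(n)}{n^{1/2}} V\!\left(\frac{Tn}{q}\right) = L\!\left(\tfrac12,\overline{\chi}\right) - \epsilon(\overline{\chi}) \sum_n \frac{\chi(n)}{n^{1/2}} F\!\left(\frac nT\right).
\]

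The main obstacle is entirely bookkeeping: one must track the Gamma factors and the $(\pi,q,T)$-powers carefully through the functional equation so that they reassemble into the kernel of $F$ and the clean factor $T^{s}$, and check that the hypotheses on $G$ (even, $G(0)=1$, double zero at $\tfrac12$) are precisely what makes the contour shift legitimate (no spurious pole at $s=-\tfrac12$) and normalises the residue at $s=0$. No genuine analytic difficulty arises beyond the routine Stirling and convexity bounds needed to justify the contour shift and the two interchanges of sum and integral.
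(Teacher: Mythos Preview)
Your proof is correct and follows essentially the same route as the paper's: unfold $V$, recognise $L(\tfrac12+s,\overline\chi)$, shift the contour to $\mathrm{Re}(s)=-1$ (using the second-order zero of $G$ at $\pm\tfrac12$ to kill the potential pole), then combine the functional equation with the substitution $s\mapsto -s$ to produce the $F$-sum. The only cosmetic difference is that the paper applies the functional equation on the line $\mathrm{Re}(s)=-1$ and then changes variables, whereas you change variables first and then apply the functional equation on $\mathrm{Re}(s)=1$; the bookkeeping is identical either way, and you have supplied more of the routine justifications (Stirling, convexity, absolute convergence) than the paper does.
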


Before proving Proposition \ref{prop: change V for F}, let us see how to use it to finish the evaluation of $M_{\text{IS,MV}}$. Proposition \ref{prop: change V for F} gives
\begin{align*}
M_{\text{IS,MV}} &= 2P_1(1)P_3(1)\sideset{}{^+}\sum_{\chi(q)} \epsilon(\chi) L \left( \frac{1}{2},\overline{\chi}\right) - 2P_1(1)P_3(1) \sideset{}{^+}\sum_{\chi(q)} \sum_n \frac{\chi(n)}{n^{\frac{1}{2}}} F(n),
\end{align*}
and by the first moment analysis (see \cite[section 3]{MicVan}, also section \ref{sec: MV B main term} below) we have
\begin{align}\label{eq: IS MV pos mt}
2P_1(1) P_3(1) \sideset{}{^+}\sum_{\chi(q)} \epsilon(\chi)L \left( \frac{1}{2},\overline{\chi}\right) = (1+o(1))2 P_1(1) P_3(1)\varphi^+(q).
\end{align}
For the other piece, we apply Lemma \ref{lem: character orthogonality} to obtain
\begin{align*}
- 2P_1(1) P_3(1) \sideset{}{^+}\sum_{\chi(q)} \sum_n \frac{\chi(n)}{n^{\frac{1}{2}}} F(n) = -P_1(1) P_3(1)\sum_{w \mid q} \varphi(w) \mu(q/w)\sum_{\substack{n \equiv \pm 1 (w) \\ (n,q)=1}} \frac{1}{n^{\frac{1}{2}}} F(n).
\end{align*}
We choose $G$ to vanish at all the poles of
\begin{align*}
\Gamma\left( \frac{s}{2} + \frac{1}{4}\right)\Gamma\left(- \frac{s}{2} + \frac{1}{4}\right)
\end{align*}
in the disc $|s| \leq A$, where $A>0$ is large but fixed. By moving the contour of integration to the right we see
\begin{align}\label{eq: F has rapid decay}
F(x) \ll \frac{1}{(1+x)^{100}},
\end{align}
say, and therefore the contribution from $n > q^{\frac{1}{10}}$ is negligible. By trivial estimation the contribution from $w \leq q^{\frac{1}{4}}$ is also negligible. For $w > q^{\frac{1}{4}}$ and $n \leq q^{\frac{1}{10}}$, we can only have $n \equiv \pm 1 \pmod{w}$ if $n = 1$. Adding back in the terms with $n \leq q^{\frac{1}{4}}$, the contribution from these terms is therefore
\begin{align}\label{eq: IS MV negative mt}
-(1+o(1))2P_1(1) P_3(1)F(1) \varphi^+(q).
\end{align}
Since the integrand in $F(1)$ is odd, we may evaluate $F(1)$ through a residue at $s = 0$. We shift the line of integration in \eqref{eq: defn of func F} to $\text{Re}(s) = -1$, picking up a contribution from the simple pole at $s = 0$. In the integral on the line $\text{Re}(s) = -1$ we change variables $s \rightarrow -s$. This yields the relation $F(1) = 1-F(1)$, whence $F(1) = \frac{1}{2}$. Combining \eqref{eq: IS MV pos mt} and \eqref{eq: IS MV negative mt}, we obtain
\begin{align*}
M_{\text{IS,MV}} &= (1+o(1)) P_1(1) P_3(1) \varphi^+(q),
\end{align*}
as desired. This yields the main term of Lemma \ref{lem: main term IS MV}.

\begin{proof}[Proof of Proposition \ref{prop: change V for F}]
We write $V$ using its definition and interchange orders of summation and integration to get
\begin{align*}
\sum_n \frac{\overline{\chi}(n)}{n^{1/2}} V \left( \frac{Tn}{q}\right) &= \frac{1}{2\pi i}\int_{(1)} \frac{\Gamma^2 \left( \frac{s}{2} + \frac{1}{4}\right)}{\Gamma^2 \left( \frac{1}{4}\right)} \frac{G(s)}{s} \left( \frac{q}{\pi}\right)^s T^{-s} L \left( \frac{1}{2} + s, \overline{\chi}\right) ds.
\end{align*}
We move the line of integration to $\text{Re}(s) = -1$, picking up a contribution of $L \left( \frac{1}{2},\overline{\chi}\right)$ from the pole at $s = 0$. Observe that we do not get any contribution from the double pole of $\Gamma^2 \left( \frac{s}{2} + \frac{1}{4}\right)$ at $s = -\frac{1}{2}$ because of our assumption that $G$ vanishes at $s = \pm\frac{1}{2}$ to second order.

Now, for the integral on the line $\text{Re}(s) = -1$, we apply the functional equation for $L \left( \frac{1}{2} + s, \overline{\chi}\right)$ and then change variables $s \rightarrow -s$ to obtain
\begin{align*}
-\epsilon(\overline{\chi})\frac{1}{2\pi i}\int_{(1)} \frac{\Gamma\left( \frac{s}{2} + \frac{1}{4}\right)\Gamma\left(- \frac{s}{2} + \frac{1}{4}\right)}{\Gamma^2\left(\frac{1}{4}\right)} \frac{G(s)}{s} T^s L \left( \frac{1}{2}+s,\chi\right) ds.
\end{align*}
The desired result follows by expanding $L\left( \frac{1}{2} + s,\chi\right)$ in its Dirichlet series and interchanging the order of summation and integration.
\end{proof}

\section{Lemma \ref{lem: main term IS MV}: error term}\label{sec: IS MV error}

Here we show that the remainder of the terms in \eqref{eq: ISMV all terms} (those with $m\ell_1\ell_3 \neq 1$) contribute only to the error term of Lemma \ref{lem: main term IS MV}. Here we must avail ourselves of the averaging over $q$.

Inserting \eqref{eq: eps char orth} into \eqref{eq: ISMV all terms} and averaging over moduli, we wish to show that
\begin{align}\label{eq: desired E1 bound}
\mathcal{E}_{1} &= \mathop{\sum \sum}_{(v,w)=1} \mu^2(v) \frac{v}{\varphi(v)} \frac{w^{\frac{1}{2}}}{v^{\frac{1}{2}}} \Psi\left(\frac{vw}{Q} \right)\mathop{\sum \sum}_{\substack{\ell_1 \leq y_1 \\ \ell_3 \leq y_3 \\ (\ell_1\ell_3,vw)=1}} \frac{\mu(\ell_1) \mu(\ell_3)P_1[\ell_1]P_3[\ell_3]}{(\ell_1\ell_3)^{\frac{1}{2}}} \\ 
&\times\mathop{\sum \sum}_{(mn,vw)=1} \frac{1}{(mn)^{\frac{1}{2}}} Z \left( \frac{mn}{vw}\right) \cos \left( \frac{2\pi n \overline{m\ell_1\ell_3 v}}{w} \right) \ll Q^{2-\epsilon + o(1)}, \nonumber
\end{align}
where $m \ell_1 \ell_3 \neq 1$, but we do not indicate this in the notation. The function $Z$ is actually just $V$ in \eqref{eq: defn of V}, but we do not wish to confuse the function $V$ with the scale $V$ that shall appear shortly.

Observe that the arithmetic weight $\frac{q}{\varphi(q)}$ has become $\frac{v}{\varphi(v)}\frac{w}{\varphi(w)}$ by multiplicativity, and that this factor of $\varphi(w)$ has canceled with $\varphi(w)$ in \eqref{eq: eps char orth}, making the sum on $w$ smooth.

The main tool we use to bound $\mathcal{E}_1'$ is the following result, due to Deshouillers and Iwaniec, on cancellation in sums of Kloosterman sums.
\begin{lemma}\label{lem: sum of kloos sum}
Let $C,D,N,R,S$ be positive numbers, and let $b_{n,r,s}$ be a complex sequence supported in $(0,N] \times (R,2R] \times (S,2S] \cap \mathbb{N}^3$. Let $g_0(\xi,\eta)$ be a smooth function having compact support in $\mathbb{R}^+ \times \mathbb{R}^+$, and let $g(c,d) = g_0(c/C,d/D)$. Then
\begin{align*}
\mathop{\sum_c \sum_d \sum_n \sum_r \sum_s}_{(rd,sc)=1} &b_{n,r,s} g(c,d) e \left(n \frac{\overline{rd}}{sc}\right) \\ 
&\ll_{\epsilon,g_0} (CDNRS)^{\epsilon} K(C,D,N,R,S)\| b_{N,R,S}\|_2,
\end{align*}
where
\begin{align*}
\| b_{N,R,S}\|_2 = \left(\sum_n \sum_r\sum_s |b_{n,r,s}|^2 \right)^{\frac{1}{2}}
\end{align*}
and
\begin{align*}
K (C,D,N,R,S)^2 = CS(RS+N)(C+RD) + C^2DS \sqrt{(RS+N)R} + D^2 N R S^{-1}.
\end{align*}
\end{lemma}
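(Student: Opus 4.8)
The plan: this statement is not really ``ours'' to prove --- it is a theorem of Deshouillers and Iwaniec, and in practice one simply cites it. That said, here is the route I would follow to establish it, which is entirely spectral in nature.

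First I would manufacture genuine Kloosterman sums from the phase $e(n\,\overline{rd}/(sc))$. This is an additive character modulo $sc$, and since $(rd,sc)=1$ I would apply Poisson summation to the smooth variable $d$ over residue classes modulo $sc$. Writing $g(c,d)=g_0(c/C,d/D)$ and carrying out the $d$-summation, the sum over reduced residues $d\ (sc)$ collapses (after a change of variable in the residue) to a Kloosterman sum $S(n,\overline{r}h;sc)$, where $h$ is the dual frequency, weighted by the $d$-Fourier transform of $g$ and hence negligible unless $h\ll sc\,(CDNRS)^{\epsilon}/D$. The frequency $h=0$ produces a Ramanujan sum $c_{sc}(n)$, which I would bound by elementary means; for $h\neq0$ one is left with a weighted sum of honest Kloosterman sums to the moving modulus $q=sc$, with $s$ and $c$ confined to dyadic boxes.

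Next I would apply the Kuznetsov trace formula (in the version valid for moduli in a fixed residue class, here multiples of $s$) to the sum over $c$, turning the Kloosterman sums into a spectral sum $\sum_j\overline{\rho_j(n)}\,\rho_j(\overline{r}h)\,\widetilde{\phi}(t_j)$ over the Hecke--Maass forms for $\Gamma_0(s)$, together with the analogous holomorphic and Eisenstein contributions; here $\widetilde{\phi}$ is the Bessel transform of the weight built from $g_0$ and its Fourier transform, whose size and support I would pin down with standard Bessel asymptotics. An application of Cauchy--Schwarz then peels off the arbitrary coefficients $b_{n,r,s}$, and the spectral large sieve inequalities of Deshouillers and Iwaniec for the Fourier coefficients $\rho_j$ --- applied in the ranges dictated by $N$, $R$, $S$ and the size of $h$ --- bound each spectral piece by $\|b_{N,R,S}\|_2$ times an explicit function of $C,D,N,R,S$. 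Summing the three contributions and simplifying yields the claimed quantity $K(C,D,N,R,S)$.

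The genuinely deep input --- the spectral large sieve itself, whose proof requires the full Kuznetsov formula --- I would take as a black box. The remaining obstacle, and the part that is delicate rather than hard, is the multi-parameter bookkeeping: one must track all five parameters, plus the auxiliary variables $h$ and $q=sc$, through the Poisson step, the Bessel/Kuznetsov transform, and the large sieve; split the analysis into regimes according to the size of the Bessel argument; dispose of the exceptional small eigenvalues using Selberg's eigenvalue bound; and choose the various dyadic dissections so that exactly the four terms comprising $K^2$ emerge as the dominant ones. This is precisely the sort of intricate but routine computation for which one prefers to quote Deshouillers and Iwaniec rather than redo it.
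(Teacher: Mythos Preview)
Your proposal is correct and matches the paper's treatment: the paper's proof is simply a citation, noting that the lemma is essentially \cite[Lemma 1]{BFI}, itself an easy consequence of \cite[Theorem 12]{DesIwa}. Your sketch of the underlying Deshouillers--Iwaniec argument (Poisson in $d$ to produce Kloosterman sums, Kuznetsov, then the spectral large sieve) is accurate and goes beyond what the paper supplies, but the core point --- that one cites rather than reproves --- is exactly what the paper does.
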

\begin{proof}
This is essentially \cite[Lemma 1]{BFI}, which is an easy consequence of \cite[Theorem 12]{DesIwa}.
\end{proof}

We need to massage \eqref{eq: desired E1 bound} before it is in a form where an application of Lemma \ref{lem: sum of kloos sum} is appropriate. Let us briefly describe our plan of attack. We apply partitions of unity to localize the variables and then separate variables with integral transforms. By using the orthogonality of multiplicative characters we will be able to assume that $v$ is quite small, which is advantageous when it comes time to remove coprimality conditions involving $v$. We next reduce to the case in which $n$ is somewhat small. This is due to the fact that the sum on $n$ is essentially a Ramanujan sum, and Ramanujan sums experience better than squareroot cancellation on average. We next use M\"obius inversion to remove the coprimality condition between $n$ and $w$. This application of M\"obius inversion introduces a new variable, call if $f$, and another application of character orthogonality allows us to assume $f$ is small. We then remove the coprimality conditions on $m$. We finally apply Lemma \ref{lem: sum of kloos sum} to get the desired cancellation, and it is crucial here that $f$ and $v$ are no larger than $Q^\epsilon$.

Let us turn to the details in earnest. We apply smooth partitions of unity (see \cite[Lemma 1.6]{BFKMM}, for instance) in all variables, so that $\mathcal{E}_1$ can be written
\begin{align}\label{eq: E1 after parts of unity}
\mathop{\sum \cdots \sum}_{M,N,L_1,L_3,V,W} \mathcal{E}_1 (M,N,L_1,L_3,V,W),
\end{align}
where
\begin{align*}
\mathcal{E}_1 (M,N,L_1,L_3,V,W) &= \mathop{\sum \sum}_{(v,w)=1} \mu^2(v) \frac{v}{\varphi(v)} \frac{w^{\frac{1}{2}}}{v^{\frac{1}{2}}}\Psi\left(\frac{vw}{Q} \right)G \left( \frac{v}{V}\right) G \left( \frac{w}{W}\right) \\
&\times\mathop{\sum \sum}_{\substack{\ell_1\leq y_1 \\ \ell_3 \leq y_3 \\ (\ell_1\ell_3,vw)=1}} \frac{\mu(\ell_1) \mu(\ell_3)P_1[\ell_1]P_3[\ell_3]}{(\ell_1\ell_3)^{\frac{1}{2}}}G \left( \frac{\ell_1}{L_1}\right) G \left( \frac{\ell_3}{L_3}\right) \\ 
&\times\mathop{\sum \sum}_{(mn,vw)=1} \frac{1}{(mn)^{\frac{1}{2}}} Z \left( \frac{mn}{vw}\right) G \left( \frac{m}{M}\right) G \left( \frac{n}{N}\right) \cos \left( \frac{2\pi n \overline{m\ell_1\ell_3 v}}{w} \right).
\end{align*}
Here $G$ is a smooth, nonnegative function supported in $[\frac{1}{2},2]$, and the numbers $M,N,L_i,V,W$ in \eqref{eq: E1 after parts of unity} range over powers of two. We may assume 
\begin{align*}
M,N,L_1,L_3,V,W \gg 1, \ \ \ \ \ VW \asymp Q, \ \ \ \ \ L_i \ll y.
\end{align*}
Furthermore, by the rapid decay of $Z$ we may assume $MN \leq Q^{1+\epsilon}$. Thus, the number of summands $\mathcal{E}_1(M,\ldots,W)$ in \eqref{eq: E1 after parts of unity} is $\ll Q^{o(1)}$. 

Up to changing the definition of $G$, we may rewrite $\mathcal{E}_1 (M,\cdots,W)$ as
\begin{align*}
\mathcal{E}_1 (M,N,L_1,L_3,V,W) &= \frac{W^{\frac{1}{2}}}{(MNL_1L_3V)^{\frac{1}{2}}} \mathop{\sum \sum}_{(v,w)=1} \alpha(v) G \left( \frac{v}{V}\right) G \left( \frac{w}{W}\right) \Psi \left( \frac{vw}{Q}\right) \\
&\times \mathop{\sum \sum}_{\substack{\ell_i \leq y_i \\ (\ell_i,vw)=1}} \beta(\ell_1)\gamma(\ell_3) G \left( \frac{\ell_1}{L_1}\right) G \left( \frac{\ell_3}{L_3}\right) \\
&\times \mathop{\sum \sum}_{(mn,vw)=1}Z \left( \frac{mn}{vw}\right) G \left( \frac{m}{M}\right) G \left( \frac{n}{N}\right) \cos \left(\frac{2\pi n \overline{m\ell_1\ell_3 v}}{w} \right),
\end{align*}
where $\alpha,\beta,\gamma$ are sequences satisfying $|\alpha(v)|,|\beta(\ell_1)|,|\gamma(\ell_3)| \ll Q^{o(1)}$.

We separate the variables in $Z$ by writing $Z$ using its definition as an integral \eqref{eq: defn of V} and moving the line of integration to $\text{Re}(s) = L^{-1}$. By the rapid decay of the $\Gamma$ function in vertical strips we may restrict to $|\text{Im}(s)| \leq Q^\epsilon$. We similarly separate the variables in $\Psi$ using the inverse Mellin transform. Therefore, up to changing the definition of some of the functions $G$, it suffices to prove that
\begin{align}\label{eq: desired bound for E1'}
\mathcal{E}_{1}' (M,N,L_1,L_3,V,W) &= \frac{W^{\frac{1}{2}}}{(MNL_1L_3V)^{\frac{1}{2}}} \mathop{\sum \sum}_{(v,w)=1} \alpha(v) G \left( \frac{v}{V}\right) G \left( \frac{w}{W}\right) \nonumber \\
&\times \mathop{\sum \sum}_{\substack{\ell_i \leq y_i \\ (\ell_i,vw)=1}} \beta(\ell_1)\gamma(\ell_3) G \left( \frac{\ell_3}{L_3}\right) G \left( \frac{\ell_3}{L_3}\right) \\
&\times \mathop{\sum \sum}_{(mn,vw)=1} G \left( \frac{m}{M}\right) G \left( \frac{n}{N}\right) e \left( \frac{n \overline{m\ell_1\ell_3 v}}{w} \right) \ll  Q^{2-\epsilon + o(1)}.\nonumber
\end{align}
Our smooth functions $G$ all satisfy $G^{(j)}(x) \ll_j Q^{j\epsilon}$ for $j \geq 0$. To save on space we write the left side of \eqref{eq: desired bound for E1'} as simply $\mathcal{E}_1'$.

Observe that the trivial bound for $\mathcal{E}_1'$ is
\begin{align}\label{eq: E1' trivial bound}
\mathcal{E}_1' &\ll V^{\frac{1}{2}}W^{\frac{3}{2}} (MN)^{\frac{1}{2}} (L_1L_3)^{\frac{1}{2}} Q^{o(1)} \ll \frac{Q^{2+\epsilon}(y_1y_3)^{\frac{1}{2}}}{V}.
\end{align}
This bound is worst when $V$ is small. Since $y_i$ will be taken close to $Q^{\frac{1}{2}}$, we therefore need to save $\approx Q^{\frac{1}{2}}$ in order to obtain \eqref{eq: desired E1 bound}. The trivial bound does show, however, that the contribution from $V > Q^{\frac{1}{2}+2\epsilon}$ is acceptably small, and we may therefore assume that $V \leq Q^{\frac{1}{2}+2\epsilon}$. Note this implies $W \gg Q^{\frac{1}{2} - \epsilon}$.

We now reduce to the case $V \ll Q^\epsilon$. We accomplish this by re-introducing multiplicative characters. The orthogonality of multiplicative characters yields
\begin{align}\label{eq: add char to mult char}
e \left( \frac{n \overline{m\ell_1\ell_3 v}}{w} \right) &= \frac{1}{\varphi(w)}\sum_{\chi (w)} \tau(\overline{\chi})\chi(n) \overline{\chi}(m\ell_1\ell_3 v).
\end{align}
Using the Gauss sum bound $|\tau(\overline{\chi})| \ll w^{\frac{1}{2}}$ we then arrange $\mathcal{E}_1'$ as
\begin{align*}
\mathcal{E}_1' &\ll \frac{W}{(MNL_1L_3V)^{\frac{1}{2}}} \sum_{w \asymp W} \frac{1}{\varphi(w)} \sum_{v \asymp V} \left|\mathop{\sum \sum}_{(mn,v)=1} \chi(n) \overline{\chi}(m) \right| \left|\mathop{\sum \sum}_{(\ell_1\ell_3,v)=1} \overline{\chi}(\ell_1\ell_3) \right|,
\end{align*}
where we have suppressed some things in the notation for brevity. By Cauchy-Schwarz and character orthogonality we obtain
\begin{align*}
\sum_{\chi(w)}\left|\mathop{\sum\sum}_{m,n} \right| \left|\mathop{\sum \sum}_{\ell_1,\ell_3} \right| \ll Q^{o(1)} (MNL_1L_3)^{\frac{1}{2}} (MN + W)^{\frac{1}{2}} (L_1L_3 + W)^{\frac{1}{2}},
\end{align*}
which yields a bound of
\begin{align}\label{eq: E1' mult char orthog}
Q^{-o(1)}\mathcal{E}_1' &\ll \frac{Q (MN)^{\frac{1}{2}} (y_1y_3)^{\frac{1}{2}}}{V^{\frac{1}{2}}} + \frac{Q^{\frac{3}{2}} (MN)^{1/2}}{V} + \frac{Q^{\frac{3}{2}} (y_1y_3)^{\frac{1}{2}}}{V} + \frac{Q^2}{V^{\frac{3}{2}}}.
\end{align}
We observe that \eqref{eq: E1' mult char orthog} is acceptable for $V \geq Q^{3\epsilon}$, say. We may therefore assume $V \leq Q^\epsilon$.

We next show that $\mathcal{E}_1'$ is small provided $N$ is somewhat large.
\begin{proposition}\label{prop: cancellation in n}
Assume the hypotheses of Lemma \ref{lem: main term IS MV}. If $N \geq M Q^{-2\epsilon}$ and $m\ell_1\ell_3 \neq 1$, then $\mathcal{E}_1' \ll Q^{2-\epsilon+o(1)}$.
\end{proposition}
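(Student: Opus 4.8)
The hypothesis $N\geq MQ^{-2\epsilon}$ — which, together with $MN\leq Q^{1+\epsilon}$, forces $M\ll Q^{1/2+\epsilon}$ and leaves $n$ ranging over a comparatively long interval — is present precisely so that one can extract genuine cancellation from the sum over $n$, and that is what the plan exploits. First I would clear the coprimality condition $(mn,vw)=1$ as it concerns $n$ by M\"obius inversion, at a cost of $Q^{o(1)}$, reducing $\mathcal{E}_1'$ (the sum in \eqref{eq: desired bound for E1'}) to its inner sum $\mathcal N=\sum_n G(n/N)\,e(n\,\overline{m\ell_1\ell_3v}/w)$ for fixed $v,w,\ell_1,\ell_3,m$. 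Since $\overline{m\ell_1\ell_3v}$ is a unit modulo $w$, applying Poisson summation to $\mathcal N$ in the modulus $w$ produces Ramanujan sums: writing $\overline a\equiv\overline{m\ell_1\ell_3v}\pmod w$, one gets $\mathcal N=\tfrac Nw\sum_{h\in\mathbb Z}\widehat G(hN/w)\,c_w(\overline a+h)$, and the frequency $h=0$ contributes $\tfrac Nw\widehat G(0)\,c_w(\overline a)=\tfrac Nw\widehat G(0)\,\mu(w)$, using $c_w(1)=\mu(w)$.

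When $N\gg WQ^{\epsilon}$ (recall $W\gg Q^{1-\epsilon}$ after the earlier reductions), the rapid decay of $\widehat G$ kills every frequency but $h=0$, so $\mathcal N\ll N/W$, a saving of the full size of the modulus over the trivial bound $\mathcal N\ll N$; inserting this and summing the remaining variables with $VW\asymp Q$, $MN\leq Q^{1+\epsilon}$ and $L_i\ll y_i$ gives $\mathcal E_1'\ll Q^{3/2+o(1)}$, comfortably within range. In the remaining range $MQ^{-2\epsilon}\leq N\ll WQ^{\epsilon}$ the nonzero frequencies survive, and here I would use that Ramanujan sums average to essentially nothing. Writing $c_w(\overline a+h)=\sum_{d\mid w}\mu(w/d)\,d\,\mathbbm 1[d\mid\overline a+h]$, interchanging, and applying Poisson once more to the progression $h\equiv-\overline a\pmod d$, the resulting inner sum of $\widehat G$ vanishes identically for $d<w/(2N)$ (because $\int_{\mathbb R}\widehat G=G(0)=0$) and is $O(1)$ for $d\geq w/(2N)$; averaging the surviving terms over $m$ and $\ell_1,\ell_3$ and using the equidistribution of $\overline{m\ell_1\ell_3v}$ in residue classes to each divisor of $w$, together with $|c_w(n)|\leq(w,n)$ for the residual terms, one finds $\sum_{m\asymp M,\,\ell_i\asymp L_i}|\mathcal N|\ll ML_1L_3\,Q^{o(1)}$ — again essentially a full saving of $N$. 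Feeding this back, $\mathcal E_1'\ll W^{3/2}V^{1/2}(ML_1L_3)^{1/2}N^{-1/2}Q^{o(1)}$, and $N\geq MQ^{-2\epsilon}$ turns this into $\mathcal E_1'\ll W^{3/2}V^{1/2}(L_1L_3)^{1/2}Q^{o(1)}\ll Q^{3/2}(y_1y_3)^{1/2}Q^{o(1)}$; since $\theta_1,\theta_3<\tfrac12$ by the hypotheses of Lemma \ref{lem: main term IS MV} we have $y_1y_3=Q^{\theta_1+\theta_3}$ with $\theta_1+\theta_3<1$, whence $\mathcal E_1'\ll Q^{2-\delta}$ for some fixed $\delta>0$.

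The hard part will be the last step. For exceptional values of $\overline a=\overline{m\ell_1\ell_3v}$ — those lying very close to a multiple of a large divisor of $w$ — the Ramanujan sum $c_w(\overline a+h)$ can be as large as $w$, so one must show that such $\overline a$ are rare as $m,\ell_1,\ell_3$ vary over their ranges. Establishing the requisite equidistribution of $\overline{m\ell_1\ell_3v}$ to moduli dividing $w$ — in effect, exhibiting cancellation in the associated incomplete Kloosterman sums — is the delicate input, and it is genuinely an averaged statement, with no termwise estimate available. I would expect the careful accounting of these exceptional configurations, rather than any single inequality, to be the most laborious portion of the argument.
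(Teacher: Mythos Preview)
Your proposal tracks the paper's argument through Poisson summation and the zero-frequency term, but it misses the single observation that makes the nonzero frequencies elementary. You leave the Ramanujan sum as $c_w(\overline a+h)$ with $\overline a\equiv\overline{m\ell_1\ell_3v}\pmod w$, and then propose to control it by proving equidistribution of $\overline{m\ell_1\ell_3v}$ modulo divisors of $w$---a statement that, as you yourself note, is essentially cancellation in incomplete Kloosterman sums. That is exactly the heavy input (Deshouillers--Iwaniec, Lemma~\ref{lem: sum of kloos sum}) deployed \emph{later} in the section, and the whole point of Proposition~\ref{prop: cancellation in n} is that it can be proved without it.

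The trick you are missing is a change of variable inside the Ramanujan sum. Writing $a=m\ell_1\ell_3v$ and substituting $b\mapsto ba$ in $c_w(\overline a+h)=\sum_{(b,w)=1}e(b(\overline a+h)/w)$ gives
\[
c_w(\overline a+h)=c_w(1+ha)=c_w\bigl(1+h\,m\ell_1\ell_3v\bigr),
\]
and with the M\"obius divisor $d\mid v$ kept track of, the paper obtains $c_w(d+hm\ell_1\ell_3v)$. The argument is now an explicit \emph{integer}, independent of $w$, and it is nonzero precisely because $m\ell_1\ell_3\neq1$ (so $|h|\,m\ell_1\ell_3v\geq 2v>v\geq d$ for $h\neq0$). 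One then averages over $w$ using only $|c_w(k)|\leq(w,k)$ and the divisor bound $\sum_{w\asymp W}(w,k)\ll W^{1+o(1)}$ for fixed $k\neq0$. This yields
\[
\mathcal E_1'\ll Q^{3/2}+Q^{3/2+\epsilon}(y_1y_3)^{1/2}(M/N)^{1/2},
\]
and the hypothesis $N\geq MQ^{-2\epsilon}$ together with $\theta_1,\theta_3<\tfrac12$ finishes the proof. No equidistribution, no Kloosterman input.

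A smaller point: your formula $\mathcal N=\tfrac Nw\sum_h\widehat G(hN/w)\,c_w(\overline a+h)$ is only correct if the coprimality $(n,w)=1$ is \emph{retained}; if you strip all coprimality from $n$ as you say, Poisson gives $N\sum_h\widehat G(N(h-\overline a/w))$, with no Ramanujan sum in sight. The paper removes only $(n,v)=1$ by M\"obius (cheap, since $V\leq Q^\epsilon$), keeps $(n,w)=1$, breaks into residue classes modulo $w$, and then applies Poisson.
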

\begin{proof}
We make use only of cancellation in the sum on $n$, say
\begin{align*}
\Sigma_N &= \sum_{(n,vw)=1} G\left( \frac{n}{N} \right) e \left( \frac{n \overline{m\ell_1\ell_3 v}}{w} \right).
\end{align*}
We use M\"obius inversion to detect the condition $(n,v)=1$, and then break $n$ into primitive residue classes modulo $w$. Thus
\begin{align*}
\Sigma_N &= \sum_{d \mid v} \mu(d) \sum_{(a,w)=1} e \left( \frac{ad \overline{m\ell_1\ell_3 v}}{w} \right) \sum_{n \equiv a (w)} G \left( \frac{dn}{N}\right).
\end{align*}
We apply Poisson summation to each sum on $n$, and obtain
\begin{align*}
\Sigma_N &= \sum_{d \mid v} \mu(d) \sum_{(a,w)=1} e \left( \frac{ad \overline{m\ell_1\ell_3 v}}{w} \right) \frac{N}{dw}\sum_{|h| \leq W^{1+\epsilon}d/N} e \left(\frac{ah}{w} \right) \widehat{G} \left(\frac{hN}{dw} \right) + O_\epsilon \left(Q^{-100} \right),
\end{align*}
say. The contribution of the error term is, of course, negligible. The contribution of the zero frequency $h = 0$ to $\Sigma_N$ is
\begin{align*}
\widehat{G}(0)\frac{N}{w}\sum_{d \mid v} \frac{\mu(d)}{d}\sum_{(a,w)=1} e \left( \frac{ad \overline{m\ell_1\ell_3 v}}{w} \right) = \widehat{G}(0)\mu(w)\frac{N}{w} \frac{\varphi(v)}{v},
\end{align*}
and upon summing this contribution over the remaining variables, the zero frequency contributes
\begin{align*}
\ll V^{\frac{1}{2}}W^{\frac{1}{2}} (MN)^{\frac{1}{2}} (y_1y_3)^{\frac{1}{2}} Q^{o(1)} \ll Q^{\frac{3}{2}}
\end{align*}
to $\mathcal{E}_1'$, and this contribution is sufficiently small.

It takes just a bit more work to bound the contribution of the nonzero frequencies $|h| > 0$. We rearrange the sum as
\begin{align*}
\sum_{d \mid v} \mu(d) \frac{N}{dw}\sum_{|h| \leq W^{1+\epsilon}d/N}\widehat{G} \left(\frac{hN}{dw} \right) \sum_{(a,w)=1} e \left(\frac{ad \overline{m\ell_1\ell_3 v}}{w} + \frac{ah}{w} \right).
\end{align*}
By a change of variables the inner sum is equal to the Ramanujan sum $c_w(hm\ell_1\ell_3v + d)$. Note that $hm\ell_1\ell_3v + d \neq 0$ because $m \ell_1\ell_3 \neq 1$. The nonzero frequencies therefore contribute to $\mathcal{E}_1'$ an amount
\begin{align*}
\ll Q^\epsilon \frac{(VWL_1L_3 M)^{\frac{1}{2}}}{N^{\frac{1}{2}}} \sup_{0 < |k| \ll Q^{O(1)}} \sum_{w \asymp W} |c_w (k)|.
\end{align*}
Since $|c_w(k)| \leq (k,w)$ the sum on $w$ is $\ll W^{1+o(1)}$. It follows that
\begin{align*}
\mathcal{E}_1' &\ll Q^{\frac{3}{2}}  + Q^{\frac{3}{2} + \epsilon} (y_1y_3)^{\frac{1}{2}} \frac{M^{1/2}}{N^{\frac{1}{2}}}.
\end{align*}
Since $y_i = Q^{\theta_i}$ and $\theta_i < \frac{1}{2} - 3\epsilon$, say, this bound for $\mathcal{E}_1'$ is acceptable provided $N \geq M Q^{-2\epsilon}$.
\end{proof}

By Proposition \ref{prop: cancellation in n} we may assume $N \leq M Q^{-2\epsilon}$. Since $MN \leq Q^{1 + \epsilon}$, the condition $N \leq M Q^{-2\epsilon}$ implies $N \leq Q^{\frac{1}{2}}$. 

We now pause to make a comment on the condition $m \ell_1 \ell_3 \neq 1$, which we have assumed throughout this section but not indicated in the notation for $\mathcal{E}_1'$. Observe that this condition is automatic if $M L_1 L_3 > 2018$. If $M L_1L_3 \ll 1$, then we may use the trivial bound \eqref{eq: E1' trivial bound} along with the bound $N \leq Q^{\frac{1}{2}} \leq  Q^{1-\epsilon}$ to obtain
\begin{align*}
\mathcal{E}_1' &\ll Q^{2-\epsilon}.
\end{align*}
We may therefore assume $ML_1L_2 \gg 1$, so that the condition $m \ell_1 \ell_3 \neq 1$ is satisfied.

We now remove the coprimality condition $(n,w) = 1$. By M\"obius inversion we have
\begin{align*}
\mathbf{1}_{(n,w)=1} = \sum_{\substack{f \mid n \\ f \mid w}} \mu(f).
\end{align*}
We move the sum on $f$ to be the outermost sum, and note $f \ll N$. We then change variables $n \rightarrow nf, w \rightarrow wf$. If $a_*$, say, is any lift of the multiplicative inverse of $m\ell_1\ell_3v$ modulo $wf$, then $a_* \equiv \overline{m\ell_1\ell_3v} \pmod{w}$, and therefore
\begin{align*}
\frac{nf \overline{m\ell_1\ell_3v}}{wf} \equiv \frac{n \overline{m\ell_1\ell_3v}}{w} \pmod{1}.
\end{align*}
It follows that
\begin{align*}
\mathcal{E}_1' &= \frac{W^{\frac{1}{2}}}{(MNL_1L_3V)^{\frac{1}{2}}}\sum_{f \ll N} \mu(f) \mathop{\sum \sum}_{(v,wf)=1} \alpha(v) G \left( \frac{v}{V}\right) G \left( \frac{wf}{W}\right) \\
&\times \mathop{\sum \sum}_{\substack{\ell_i \leq y_i \\ (\ell_i,fvw)=1}} \beta(\ell_1)\gamma(\ell_3) G \left( \frac{\ell_1}{L_1}\right) G \left( \frac{\ell_3}{L_3}\right) \mathop{\sum \sum}_{\substack{(m,fvw)=1 \\ (n,v)=1}} G \left( \frac{m}{M}\right) G \left( \frac{nf}{N}\right) e \left( \frac{n \overline{m\ell_1\ell_3v}}{w} \right).
\end{align*}

We next reduce the size of $f$ by a similar argument to the one that let us impose the condition $V \leq Q^\epsilon$. We obtain by transitioning to multiplicative characters (recall \eqref{eq: add char to mult char}) that the sum over $v,w,m,n,\ell_1,\ell_3$ is bounded by
\begin{align*}
\ll \frac{W^{\frac{1}{2}+o(1)} V^{\frac{1}{2}}}{f^{\frac{1}{2}}} \sum_{w \asymp W/f} \frac{1}{w^{\frac{1}{2}}} \left(\frac{(MN)^{\frac{1}{2}}}{f^{\frac{1}{2}}} + w^{\frac{1}{2}} \right)\left((L_1L_3)^{\frac{1}{2}} + w^{\frac{1}{2}} \right) \ll \frac{Q^{2+\epsilon}}{f^{\frac{3}{2}}},
\end{align*}
and therefore the contribution from $f > Q^{4\epsilon}$ is negligible.

Now the only barrier to applying Lemma \ref{lem: sum of kloos sum} is the conditions $(m,f) = 1$ and $(m,v) = 1$. We remove both of these conditions with M\"obius inversion, obtaining
\begin{align*}
&\sum_{f \ll \min(N,Q^\epsilon)} \mu(f) \sum_{h \mid f} \mu(h) \sum_{t \ll V} \mu(t) \frac{W^{\frac{1}{2}}}{(MNL_1L_3V)^{\frac{1}{2}}}\mathop{\sum \sum}_{\substack{(v,wf)=1 \\ (w,ht)=1}} \alpha(v) G \left( \frac{vt}{V}\right) G \left( \frac{wf}{W}\right) \\
&\times \mathop{\sum \sum}_{\substack{\ell_i \leq y_i \\ (\ell_i,fvw)=1}} \beta(\ell_1)\gamma(\ell_3) G \left( \frac{\ell_1}{L_1}\right) G \left( \frac{\ell_3}{L_3}\right) \mathop{\sum \sum}_{\substack{(m,w)=1 \\ (n,v)=1}} G \left( \frac{mht}{M}\right) G \left( \frac{nf}{N}\right) e \left( \frac{n \overline{mht^2\ell_1\ell_3v}}{w} \right).
\end{align*}
We set
\begin{align*}
b_{n,ht^2 k} = \mathbf{1}_{(n,v)=1}  G \left(\frac{nf}{N} \right)  \mathop{\sum_{\ell_1} \sum_{\ell_3} \sum_v}_{\substack{\ell_1 \ell_3 v = k \\ (\ell_1\ell_3,v)=1}} \beta(\ell_1) \gamma(\ell_3) \alpha(v) G \left( \frac{vt}{V}\right)G \left( \frac{\ell_1}{L_1}\right)G \left( \frac{\ell_3}{L_3}\right)
\end{align*}
if $(k,f)=1$, and for integers $r$ not divisible by $ht^2$ we set $b_{n,r}=0$. It follows that if $b_{n,r}\neq 0$, then $n \asymp N/f$ and $r \asymp ht L_1L_3V$ with $r \equiv 0 (ht^2)$. The sum over $n,r,m,w$ is therefore a sum of the form to which Lemma \ref{lem: sum of kloos sum} may be applied. We note that
\begin{align*}
\| b_{N,R}\|_2 \ll \frac{Q^{o(1)}}{(ft)^{\frac{1}{2}}} (NL_1L_3V)^{\frac{1}{2}},
\end{align*}
and therefore by Lemma \ref{lem: sum of kloos sum} we have
\begin{align*}
\mathcal{E}_1' &\ll Q^\epsilon \sum_{f \ll Q^\epsilon} \frac{1}{f^{\frac{1}{2}}} \sum_{h \mid f} \sum_{t \ll Q^\epsilon}\frac{1}{t^{\frac{1}{2}}} \frac{W^{\frac{1}{2}}}{M^{\frac{1}{2}}} \\
&\times\Bigg\{ \frac{W^{\frac{1}{2}}}{f^{\frac{1}{2}}} \left((htL_1L_3V)^{\frac{1}{2}} + \frac{N^{\frac{1}{2}}}{f^{\frac{1}{2}}} \right)\left(\frac{W^{\frac{1}{2}}}{f^{\frac{1}{2}}} + (ML_1L_3V)^{\frac{1}{2}} \right) \\
&+ \frac{W}{f} \frac{M^{\frac{1}{2}}}{(ht)^{\frac{1}{2}}}\left((htL_1L_3 V)^{\frac{1}{2}} + (ht L_1L_3 N V)^{\frac{1}{4}} \right) + \frac{M}{ht} (ht L_1L_3 N V)^{\frac{1}{2}} \Bigg\} \\
&\ll Q^{\epsilon} \Bigg(\frac{W^{\frac{3}{2}} (y_1y_3)^{\frac{1}{2}}}{M^{\frac{1}{2}}} + Wy_1y_3 + W^{\frac{3}{2}} \frac{N^{\frac{1}{2}}}{M^{\frac{1}{2}}} + W(y_1y_3)^{\frac{1}{2}} N^{\frac{1}{2}} + W^{\frac{3}{2}}(y_1y_3)^{\frac{1}{2}} \\ 
&+ W^{\frac{3}{2}}(y_1y_3)^{\frac{1}{4}}N^{\frac{1}{4}} + W^{\frac{1}{2}}Q^{\frac{1}{2}}(y_1y_3)^{\frac{1}{2}} \Bigg) \ll Q^{2 - \epsilon},
\end{align*}
upon recalling the bounds $W \ll Q$, $y_i \leq Q^{\theta_i}$ with $\theta_i < \frac{1}{2}$, and $N \leq Q^{1-\epsilon}$. This completes the proof of Lemma \ref{lem: main term IS MV}.

\section{Lemma \ref{lem: main term B MV}: main term}\label{sec: MV B main term}

In this section we obtain the main term of Lemma \ref{lem: main term B MV}. We allow ourselves to recycle some notation from sections \ref{sec: IS MV main term} and \ref{sec: IS MV error}.

Recall that we wish to asymptotically evaluate
\begin{align*}
\sideset{}{^+}\sum_{\chi(q)} \left| L \left( \frac{1}{2},\chi \right)\right|^2 \psi_{\text{B}}(\chi) \psi_{\text{MV}}(\overline{\chi}).
\end{align*}

We begin precisely as in section \ref{sec: IS MV main term}. Inserting the definitions of $\psi_\text{B}(\chi)$ and $\psi_{\text{MV}}(\overline{\chi})$, we must asympotically evaluate
\begin{align}\label{eq: full B MV expanded}
\frac{2}{L}\mathop{\sum \sum}_{\substack{bc \leq y_2 \\ (bc,q)=1}}\frac{\Lambda(b) \mu(c) P_2[bc]}{(bc)^{\frac{1}{2}}} \sum_{\substack{\ell \leq y_3 \\ (\ell,q)=1}} \frac{\mu(\ell) P_3[\ell]}{\ell^{\frac{1}{2}}} \mathop{\sum \sum}_{(mn,q)=1} \frac{1}{(mn)^{\frac{1}{2}}} V \left( \frac{mn}{q}\right) \sideset{}{^+}\sum_{\chi(q)} \epsilon(\chi) \chi(c \ell m) \overline{\chi}(bn).
\end{align}

The main term of Lemma \ref{lem: main term IS MV} arose from $m \ell_1 \ell_3 = 1$. In the present case, the main term contains more than just $c \ell m = 1$; the main term arises from those $c \ell m$ which divide $b$. The support of the von Mangoldt function constrains $b$ to be a prime power, so the condition $c \ell m \mid b$ is straightforward, but tedious, to handle. 

There are three different cases to consider. The first case is $c \ell m = 1$. In the second case we have $c \ell m = p$ and $b = p$. Both of these cases contribute to the main term. The third case is everything else ($b = p^j$ with $j \geq 2$ and $c\ell m \mid b$ with $c \ell m \geq p$), and this case contributes only to the error term.

\subsection{First case: $c \ell m = 1$} \ \\

If $c \ell m$ is equal to 1 then certainly $c \ell m$ divides $b$ for every $b$. The contribution from $c \ell m = 1$ is equal to
\begin{align*}
M &= \frac{2P_3(1)}{L} \sideset{}{^+}\sum_{\chi (q)}\epsilon(\chi) \sum_{b \leq y_2} \frac{\Lambda(b) \overline{\chi}(b) P_2[b]}{b^{\frac{1}{2}}} \sum_n \frac{\overline{\chi}(n)}{n^{\frac{1}{2}}} V \left( \frac{n}{q}\right).
\end{align*}
By an application of Proposition \ref{prop: change V for F},
\begin{align}\label{eq: MV B main term split}
M &= M_1 + M_2,
\end{align}
where
\begin{align*}
M_1 &= \frac{2P_3(1)}{L} \sum_{\substack{b \leq y_2 \\ (b,q)=1}} \frac{\Lambda(b) P_2[b]}{b^{\frac{1}{2}}} \sideset{}{^+}\sum_{\chi (q)} \epsilon(\chi) \overline{\chi}(b) L \left( \frac{1}{2},\overline{\chi} \right), \\
M_2 &= -\frac{2P_3(1)}{L} \mathop{\sum \sum}_{\substack{b \leq y_2 \\ (bn,q)=1}} \frac{\Lambda(b) P_2[b]}{(bn)^{\frac{1}{2}}} F(n) \sideset{}{^+}\sum_{\chi (q)} \chi(n) \overline{\chi}(b),
\end{align*}
and $F$ is the rapidly decaying function given by \eqref{eq: defn of func F}. A main term arises from $M_1$, and $M_2$ contributes only to the error term.

Let us first investigate $M_2$. By Lemma \ref{lem: character orthogonality} we have
\begin{align*}
M_2 &= -\frac{P_3(1)}{L}\sum_{w \mid q} \varphi(w) \mu(q/w) \mathop{\sum \sum}_{\substack{b \leq y_2 \\ b \equiv \pm n (w) \\ (bn,q)=1}} \frac{\Lambda(b) P_2[b]}{(bn)^{\frac{1}{2}}} F(n).
\end{align*}
By the rapid decay of $F$ (recall \eqref{eq: F has rapid decay}) we may restrict $n$ to $n \leq q^{\frac{1}{10}}$. The contribution from $w \leq q^{\frac{1}{2}+\epsilon}$ is then trivially $\ll q^{1 - \epsilon}$, since $y_2 \ll q^{\frac{1}{2}-\epsilon}$. For the remaining terms, the congruence condition $b \equiv \pm n (w)$ becomes $b = n$, and thus
\begin{align*}
M_2 &\ll q^{1-\epsilon} +\frac{1}{L} \sum_{\substack{w \mid q \\ w > q^{\frac{1}{2}+\epsilon}}} \varphi(w) \sum_{\substack{b \leq q^{\frac{1}{10}}}} \frac{\Lambda(b) P_2[b]}{b} F(b) \ll qL^{-1}.
\end{align*}

Let us turn to $M_1$. We use the following lemma to represent the central value $L \left( \frac{1}{2},\overline{\chi}\right)$.
\begin{lemma}\label{lem: first moment central value}
Let $\overline{\chi}$ be a primitive even character modulo $q$. Then
\begin{align*}
L \left( \frac{1}{2},\overline{\chi}\right) &= \sum_n \frac{\overline{\chi}(n)}{n^{\frac{1}{2}}} V_1 \left( \frac{n}{q^{\frac{1}{2}}}\right) + \epsilon(\overline{\chi}) \sum_n \frac{\chi(n)}{n^{\frac{1}{2}}} V_1 \left( \frac{n}{q^{\frac{1}{2}}}\right),
\end{align*}
where
\begin{align*}
V_1(x) &= \frac{1}{2\pi i}\int_{(1)} \frac{\Gamma \left( \frac{s}{2} + \frac{1}{4}\right)}{\Gamma \left( \frac{1}{4}\right)} \frac{G_1(s)}{s} \pi^{-s/2} x^{-s}ds
\end{align*}
and $G_1(s)$ is an even polynomial satisfying $G_1(0) = 1$.
\end{lemma}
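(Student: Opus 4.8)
The plan is to prove Lemma~\ref{lem: first moment central value} by the standard contour-shift derivation of an approximate functional equation, exactly as in the proof of Lemma~\ref{lem: approx func eq} or \cite[Theorem~5.3]{IwaKow}, but for a single factor of $L\!\left(\tfrac12,\overline\chi\right)$ rather than $\left|L\!\left(\tfrac12,\chi\right)\right|^2$. First I would begin from the first sum on the right-hand side: unfolding the Mellin integral defining $V_1$ and interchanging the order of summation and integration --- legitimate because the Dirichlet series for $L\!\left(\tfrac12+s,\overline\chi\right)$ converges absolutely on $\mathrm{Re}(s)=1$ and $\Gamma$ decays rapidly in vertical strips --- yields
\[
\sum_n \frac{\overline\chi(n)}{n^{1/2}}\, V_1\!\left(\frac{n}{q^{1/2}}\right) = \frac{1}{2\pi i}\int_{(1)} \frac{\Gamma\!\left(\frac s2+\frac14\right)}{\Gamma\!\left(\frac14\right)}\,\frac{G_1(s)}{s}\left(\frac q\pi\right)^{s/2} L\!\left(\tfrac12+s,\overline\chi\right)\,ds =: I.
\]

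Next I would move the line of integration to $\mathrm{Re}(s) = -\tfrac14$, crossing only the simple pole of $1/s$ at $s=0$, whose residue is $L\!\left(\tfrac12,\overline\chi\right)$ since $G_1(0)=1$. It is important to stop strictly between $s=0$ and $s=-\tfrac12$: this way the pole of $\Gamma\!\left(\tfrac s2+\tfrac14\right)$ at $s=-\tfrac12$ contributes nothing, and no vanishing hypothesis on $G_1$ beyond $G_1(0)=1$ is required. Hence $I = L\!\left(\tfrac12,\overline\chi\right) + J$, where $J$ is the integral over $\mathrm{Re}(s)=-\tfrac14$. On that line I would invoke the functional equation of $L(s,\overline\chi)$ for the even primitive character $\chi$, which after cancelling the common factor $(q/\pi)^{1/4}\Gamma\!\left(\tfrac14\right)^{-1}$ takes the form
\[
\frac{\Gamma\!\left(\frac s2+\frac14\right)}{\Gamma\!\left(\frac14\right)}\left(\frac q\pi\right)^{s/2} L\!\left(\tfrac12+s,\overline\chi\right) = \epsilon(\overline\chi)\,\frac{\Gamma\!\left(-\frac s2+\frac14\right)}{\Gamma\!\left(\frac14\right)}\left(\frac q\pi\right)^{-s/2} L\!\left(\tfrac12-s,\chi\right),
\]
with $\epsilon(\overline\chi) = \tau(\overline\chi)/q^{1/2}$ the root number of \eqref{eq: defn of epsilon root number}.

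Substituting $s\mapsto -s$ in $J$ returns the contour to $\mathrm{Re}(s)=\tfrac14$ and introduces an overall factor $-1$, since $G_1$ is even and so $G_1(-s)/(-s) = -G_1(s)/s$; as there are no poles of the integrand for $\tfrac14\le\mathrm{Re}(s)\le 1$, I may shift the contour back to $\mathrm{Re}(s)=1$. Expanding $L\!\left(\tfrac12+s,\chi\right)$ in its Dirichlet series and interchanging summation and integration once more identifies the result as $-\epsilon(\overline\chi)\sum_n \chi(n)n^{-1/2}V_1\!\left(n/q^{1/2}\right)$, so $J = -\epsilon(\overline\chi)\sum_n \chi(n)n^{-1/2}V_1\!\left(n/q^{1/2}\right)$. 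Combining, $L\!\left(\tfrac12,\overline\chi\right) = I - J$ is precisely the asserted identity. The argument is routine; the only steps requiring attention are the sign bookkeeping in the substitution $s\mapsto -s$ (where the evenness of $G_1$ is used) and the placement of the intermediate contour at $\mathrm{Re}(s)=-\tfrac14$ so as to avoid the gamma pole at $s=-\tfrac12$ --- I do not anticipate any serious obstacle.
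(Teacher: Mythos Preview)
Your argument is correct and is precisely the standard contour-shift derivation of the approximate functional equation that the paper invokes by citing \cite[(2.2)]{IwaSar}; the paper gives no independent proof. Your care in stopping the contour at $\mathrm{Re}(s)=-\tfrac14$ so as not to require any vanishing hypothesis on $G_1$ at $s=-\tfrac12$ is exactly right, since the lemma as stated imposes only $G_1(0)=1$ and evenness.
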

\begin{proof}
See \cite[(2.2)]{IwaSar}.
\end{proof}

Applying Lemma \ref{lem: first moment central value}, the main term $M_1$ naturally splits as $M_1 = M_{1,1} + M_{1,2}$, where
\begin{align*}
M_{1,1} &= \frac{2P_3(1)}{L} \mathop{\sum \sum}_{\substack{b \leq y_2 \\ (bn,q)=1}} \frac{\Lambda(b) P_2[b]}{(bn)^{\frac{1}{2}}} V_1 \left( \frac{n}{q^{\frac{1}{2}}}\right)\sideset{}{^+}\sum_{\chi(q)} \epsilon(\chi) \overline{\chi}(bn), \\
M_{1,2} &= \frac{2P_3(1)}{L} \mathop{\sum \sum}_{\substack{b \leq y_2 \\ (bn,q)=1}} \frac{\Lambda(b) P_2[b]}{(bn)^{\frac{1}{2}}} V_1 \left( \frac{n}{q^{\frac{1}{2}}}\right)\sideset{}{^+}\sum_{\chi(q)} \chi(n)\overline{\chi}(b).
\end{align*}
Applying character orthogonality to $M_{1,1}$ we arrive at
\begin{align*}
M_{1,1} &= \frac{2P_3(1)}{L q^{1/2}} \mathop{\sum \sum}_{\substack{vw = q \\ (v,w)=1}} \mu^2(v) \varphi(w) \mathop{\sum \sum}_{\substack{b \leq y_2 \\ (bn,q)=1}} \frac{\Lambda(b) P_2[b]}{(bn)^{\frac{1}{2}}} V_1 \left( \frac{n}{q^{\frac{1}{2}}}\right)\cos\left(\frac{2\pi bn\overline{v}}{w} \right),
\end{align*}
and a trivial estimation shows
\begin{align*}
M_{1,1} \ll q^{1-\epsilon}.
\end{align*}

Let us lastly examine $M_{1,2}$, from which a main term arises. By character orthogonality we have
\begin{align*}
M_{1,2} &= \frac{P_3(1)}{L} \sum_{w \mid q} \varphi(w) \mu(q/w)\mathop{\sum \sum}_{\substack{b \leq y_2 \\ b \equiv \pm n (w) \\ (b,q)=1}} \frac{\Lambda(b) P_2[b]}{(bn)^{\frac{1}{2}}} V_1 \left( \frac{n}{q^{\frac{1}{2}}}\right).
\end{align*}
By trivial estimation, the contribution from $w \leq q^{\frac{1}{2} + \epsilon}$ is
\begin{align*}
\ll \sum_{\substack{w \mid q \\ w \leq q^{\frac{1}{2} + \epsilon}}} \varphi(w) \sum_{b \leq y_2} \frac{1}{b^{1/2}} \sum_{\substack{n \leq q^{\frac{1}{2} + \epsilon} \\ n \equiv \pm b (w)}} \frac{1}{n^{\frac{1}{2}}} \ll y_2^{\frac{1}{2}} \sum_{\substack{w \mid q \\ w \leq q^{\frac{1}{2} + \epsilon}}} \varphi(w) \left( \frac{q^{\frac{1}{4} + \epsilon}}{w} + O(1)\right) \ll q^{\frac{3}{4} + \epsilon}.
\end{align*}
By the rapid decay of $V_1$, for $w > q^{\frac{1}{2} + \epsilon}$ the congruence $b \equiv \pm n (w)$ becomes $b = n$. Adding back in the terms $w \leq q^{\frac{1}{2} +\epsilon}$, we have
\begin{align*}
M_{1,2} &= \frac{2P_3(1)}{L} \varphi^+(q) \sum_{\substack{b \leq y_2 \\ (b,q)=1}} \frac{\Lambda(b) P_2[b]}{b} V_1 \left( \frac{b}{q^{\frac{1}{2}}}\right) + O(q^{1-\epsilon}).
\end{align*}
For $x \ll 1$ we see by a contour shift that
\begin{align*}
V_1(x) &= 1 + O(x^{\frac{1}{3}}),
\end{align*}
and we have $b q^{-1/2} \ll q^{-\epsilon}$. It follows that
\begin{align*}
M_{1,2} &= O(q^{1-\epsilon}) + \frac{2P_3(1)}{L} \varphi^+(q) \sum_{\substack{b \leq y_2 \\ (b,q)=1}} \frac{\Lambda(b) P_2[b]}{b}.
\end{align*}
We have
\begin{align*}
\sum_{(b,q)>1} \frac{\Lambda(b)}{b} &\ll 1 + \sum_{p \mid q} \frac{\log p}{p} \ll \log \log q,
\end{align*}
and therefore we may remove the condition $(b,q) = 1$ at the cost of an error $O \left( q L^{-1+\epsilon}\right)$. From the estimate
\begin{align*}
\sum_{n \leq x} \frac{\Lambda(n)}{n} = \log x + O(1),
\end{align*}
summation by parts, and elementary manipulations, we obtain
\begin{align*}
\sum_{\substack{b \leq y_2}} \frac{\Lambda(b) P_2[b]}{b} &= (\log y_2) \int_0^1 P_2(u) du + O(1).
\end{align*}
Therefore, the contribution to the main term of Lemma \ref{lem: main term B MV} from $c \ell m = 1$ is
\begin{align}\label{eq: contrib from clm = 1}
(2\theta_2 P_3(1)\widetilde{P_2}(1) + o(1)) \varphi^+(q).
\end{align}

\subsection*{Second case: $c\ell m = p$, $b = p$} \ \\

Another main term which contributes to Lemma \ref{lem: main term B MV} comes from $c\ell m = p$ and $b = p$. There are three subcases: $(c,\ell,m) = (p,1,1), (1,p,1),$ or $(1,1,p)$. These three cases give (compare with \eqref{eq: full B MV expanded})
\begin{align*}
N_1 &= -\frac{2P_3(1)}{L} \sum_{\substack{p \leq y_2^{1/2} \\ (p,q)=1}} \frac{(\log p)P_2\left(\frac{\log(y_2^{1/2}/p)}{\log (y_2^{1/2})} \right)}{p} \sideset{}{^+}\sum_{\chi (q)} \epsilon(\chi) \sum_{n} \frac{\overline{\chi}(n)}{n^{\frac{1}{2}}} V \left( \frac{n}{q}\right), \\
N_2 &= -\frac{2}{L}\sum_{\substack{p \leq y_2 \\ (p,q)=1}} \frac{(\log p)P_2[p]P_3[p]}{p} \sideset{}{^+}\sum_{\chi (q)} \epsilon(\chi) \sum_{n} \frac{\overline{\chi}(n)}{n^{\frac{1}{2}}} V \left( \frac{n}{q}\right), \\
N_3 &= \frac{2P_3(1)}{L}  \sum_{\substack{p \leq y_2 \\ (p,q)=1}} \frac{(\log p)P_2[p]}{p}\sideset{}{^+}\sum_{\chi(q)} \epsilon(\chi) \sum_n \frac{\overline{\chi}(n)}{n^{\frac{1}{2}}} V \left( \frac{pn}{q}\right).
\end{align*}
The first two are somewhat easier to handle than the last one. We apply Proposition \ref{prop: change V for F} then argue as in section \ref{sec: IS MV main term} and the $c \ell m = 1$ case to obtain
\begin{align*}
\sideset{}{^+}\sum_{\chi (q)} \epsilon(\chi) \sum_{n} \frac{\overline{\chi}(n)}{n^{1/2}} V \left( \frac{n}{q}\right) &= \frac{1}{2}\varphi^+(q) + O(q^{1-\epsilon}).
\end{align*}
It follows that
\begin{align}\label{eq: intermed B MV main terms}
N_1 &= -\left(\frac{\theta_2}{2} P_3(1)\widetilde{P_2}(1) + o(1) \right) \varphi^+(q), \\
N_2 &= - \left(\theta_2 \int_0^1 P_2(u)P_3(u)du + o(1) \right) \varphi^+(q). \nonumber
\end{align}
Combining \eqref{eq: contrib from clm = 1} and \eqref{eq: intermed B MV main terms} gives the main term of Lemma \ref{lem: main term B MV}.

The final term $N_3$ is more difficult because the inner sum now depends on $p$. However, $M_3$ contributes only to the error term. By Proposition \ref{prop: change V for F} with $T = p$,
\begin{align}\label{eq: N3 decomp}
\sum_n \frac{\overline{\chi}(n)}{n^{\frac{1}{2}}} V \left( \frac{pn}{q}\right) &= L \left( \frac{1}{2},\overline{\chi}\right) -\sum_n \frac{\chi(n)}{n^{\frac{1}{2}}} F \left( \frac{n}{p}\right).
\end{align}
The first term on the right side of \eqref{eq: N3 decomp} contributes to $N_3$ an amount
\begin{align}\label{eq: N3 contrib part 1}
\left(2\theta_2 P_3(1)\widetilde{P_2}(1) + o(1)\right) \varphi^+(q).
\end{align}
For the second term on the right side of \eqref{eq: N3 decomp} we use character orthogonality and get
\begin{align*}
-\frac{2P_3(1)}{L}\sum_{\substack{p \leq y_2 \\ (p,q)=1}}\frac{(\log p)P_2[p]}{p}\frac{1}{2}\sum_{w \mid q} \varphi(w)\mu(q/w) \sum_{n \equiv \pm 1 (w)} \frac{1}{n^{\frac{1}{2}}} F \left( \frac{n}{p}\right).
\end{align*}
By the rapid decay of $F$ the contribution from $n > p^{\frac{11}{10}}$, say, is $O(qL^{-1})$. We next estimate trivially the contribution from $w \leq q^{\frac{3}{5}}$, say. We have the bound
\begin{align*}
\sum_{\substack{n \equiv \pm 1 (w) \\ n \leq p^{\frac{11}{10}}}} \frac{1}{n^{\frac{1}{2}}} F \left( \frac{n}{p}\right) \ll q^\epsilon \left(\frac{p^{\frac{11}{20}}}{w} + 1 \right),
\end{align*}
and this contributes to $N_3$ an amount
\begin{align*}
\ll q^{\frac{3}{5}+\epsilon} + q^\epsilon \sum_{p \leq y_2} p^{-\frac{9}{20}} \ll q^{\frac{3}{5}+\epsilon},
\end{align*}
since $y_2 \ll q^{\frac{1}{2}}$. For $w > q^{\frac{3}{5}}$ and $n \leq p^{\frac{11}{10}}$ the congruence $n \equiv \pm 1 (w)$ becomes $n = 1$. By a contour shift we have
\begin{align*}
F\left( \frac{1}{p}\right) &= 1 + O \left(p^{-\frac{1}{2}} \right).
\end{align*}
Thus, the second term on the right side of \eqref{eq: N3 decomp} contributes to $N_3$ an amount
\begin{align}\label{eq: N3 contrib second part}
-\left(2\theta_2 P_3(1)\widetilde{P_2}(1) + o(1)\right) \varphi^+(q),
\end{align}
and \eqref{eq: N3 contrib part 1} and \eqref{eq: N3 contrib second part} together imply $N_3$ is negligible.

\subsection*{Third case: everything else} \ \\

This case is the contribution from $b = p^j$ with $j \geq 2$ and $c\ell m \mid b$ with $c \ell m \geq p$. This case contributes an error of size $O(q L^{-1+\epsilon})$, essentially because the sum
\begin{align*}
\sum_{\substack{p^k \\ k \geq 2}} \frac{\log(p^k)}{p^k}
\end{align*}
converges. There are four different subcases to consider, since the M\"obius functions attached to $c$ and $\ell$ imply $c,\ell \in \{1,p\}$. The same techniques we have already employed allow one to bound the resulting sums, so we leave the details for the interested reader. This completes the proof of Lemma \ref{lem: main term B MV}.

\section{Lemma \ref{lem: main term B MV}: error term}\label{sec: B MV error}

After the results of the previous section, it remains to finish the proof of Lemma \ref{lem: main term B MV} by showing the error term of \eqref{eq: full B MV expanded} is negligible. The argument is very similar to that given in section \ref{sec: IS MV error}, and, indeed, the arguments are identical after a point.

The error term has the form
\begin{align*}
\mathcal{E}_2 &= \mathop{\sum \sum}_{(v,w)=1} \mu^2(v) \frac{v}{\varphi(v)} \frac{w^{\frac{1}{2}}}{v^{\frac{1}{2}}} \Psi\left(\frac{vw}{Q} \right)\mathop{\sum}_{\substack{\ell \leq y_3 \\ (\ell,vw)=1}} \frac{\mu(\ell)P_3[\ell]}{\ell^{\frac{1}{2}}} \\
&\times \mathop{\sum \sum}_{\substack{bc \leq y_2 \\ (bc,vw)=1}} \frac{\Lambda(b) \mu(c) P_2[bc]}{(bc)^{\frac{1}{2}}} \mathop{\sum \sum}_{(mn,vw)=1} \frac{1}{(mn)^{\frac{1}{2}}} V \left( \frac{mn}{vw}\right) \cos \left( \frac{2\pi bn \overline{c\ell m v}}{w} \right),
\end{align*}
where we also have the condition $c \ell m \nmid b$, which we do not indicate in the notation. This condition is awkward, but turns out to be harmless.

We note that we may separate the variables $b$ and $c$ from one another in $P_2[bc]$ by linearity, the additivity of the logarithm, and the binomial theorem. Thus, it suffices to study $\mathcal{E}_1$ with $P_2[bc]$ replaced by $(\log b)^{j_1} (\log c)^{j_2}$, for $j_i$ some fixed nonnegative integers. Arguing as in the reduction to \eqref{eq: desired bound for E1'}, we may bound $\mathcal{E}_2$ by $\ll Q^{o(1)}$ instances of $\mathcal{E}_2' = \mathcal{E}_2' (B,C,L,M,N,V,W)$, where
\begin{align}\label{eq: desired bound for E2'}
\mathcal{E}_2' &= \frac{W^{\frac{1}{2}}}{(BCLMNV)^{\frac{1}{2}}}\mathop{\sum \sum}_{(v,w)=1} \alpha(v) G \left( \frac{v}{V}\right) G \left( \frac{w}{W}\right) \nonumber \\
&\times \sum_{\substack{\ell \leq y_3 \\ (\ell,vw)=1}} \beta(\ell) G \left( \frac{\ell}{L}\right)\mathop{\sum \sum}_{\substack{bc \leq y_2 \\ (bc,vw)=1}} \gamma(b)\delta(c) G \left( \frac{b}{B}\right) G \left( \frac{c}{C}\right) \\
&\times \mathop{\sum \sum}_{(mn,vw)=1} G \left( \frac{m}{M}\right) G \left( \frac{n}{N}\right) e \left( \frac{bn \overline{c\ell m v}}{w} \right) \nonumber,
\end{align}
the function $G$ is smooth as before, and $\alpha,\beta,\gamma,\delta$ are sequences $f$ satisfying $|f(z)| \ll Q^{o(1)}$. We also have the conditions
\begin{align*}
VW \asymp Q, \ \ \ \ MN \leq Q^{1+\epsilon}, \ \ \ \ BC \ll y_2, \ \ \ \ L \ll y_3, \ \ \ \ B,C,L,M,N,V,W \gg 1.
\end{align*}
By the argument that gave \eqref{eq: E1' mult char orthog} we may also assume $V \leq Q^\epsilon$. Lastly, we may remove the condition $bc \leq y_2$ by Mellin inversion, at the cost of changing $\gamma$ and $\delta$ by $b^{it_0}, c^{it_0}$, respectively, where $t_0 \in \mathbb{R}$ is arbitrary (see \cite[Lemma 9]{DFI}, for instance).

Recall the condition $c\ell m\nmid b$. This condition is unnecessary if $CLM > 2018B$, so it is only in the case $CLM \ll B$ where we need to deal with it. However, the case $CLM \ll B$ is exceptional, since $B$ is bounded by $y_2 \ll Q^{\frac{1}{2}}$ but generically we would expect $CLM$ to be much larger than $Q^{\frac{1}{2}}$.

Indeed, we now show that when $CLM \ll B$ it suffices to get cancellation from the $n$ variable alone. The proof is essentially Proposition \ref{prop: cancellation in n}, so we just remark upon the differences. By M\"obius inversion and Poisson summation we have
\begin{align*}
\sum_{(n,vw)=1} G \left( \frac{n}{N} \right) e \left(\frac{bn \overline{c \ell m v}}{w} \right) &= \mu(w)\frac{N}{w}\frac{\varphi(v)}{v} \\ 
&+ \sum_{d \mid v} \mu(d) \frac{N}{dw}\sum_{|h| \leq W^{1+\epsilon}d/N}\widehat{G} \left(\frac{hN}{dw} \right) \sum_{(a,w)=1} e \left(\frac{abd \overline{c \ell m v}}{w} + \frac{ah}{w} \right) \\ 
&+ O(Q^{-100}).
\end{align*}
The first and third terms contribute acceptable amounts, so consider the second term. The sum over $a$ is the Ramanujan sum $c_w(hc\ell m v+bd)$, and since $c\ell m$ does not divide $b$ the argument of the Ramanujan sum is non-zero. Following the proof of Proposition \ref{prop: cancellation in n}, we therefore obtain a bound of
\begin{align}\label{eq: E2' after canc in n var}
\mathcal{E}_2' &\ll \frac{Q^{\frac{3}{2}+\epsilon} (BCLM)^{\frac{1}{2}}}{N^{\frac{1}{2}}}.
\end{align}
By the reasoning immediately after Proposition \ref{prop: cancellation in n}, the bound \eqref{eq: E2' after canc in n var} allows us to assume $N \leq M Q^{-2\epsilon}$, so that $N \leq Q^{\frac{1}{2}}$, regardless of whether $CLM \ll B$. In the case $CLM \ll B$, the bound \eqref{eq: E2' after canc in n var} becomes
\begin{align*}
\mathcal{E}_2' &\ll \frac{Q^{\frac{3}{2} + \epsilon} B}{N^{\frac{1}{2}}} \ll Q^{\frac{3}{2} + \epsilon} B \ll Q^{\frac{3}{2} + \theta_2 + \epsilon} \ll Q^{2-\epsilon},
\end{align*}
which of course is acceptable.

At this point we can follow the rest of the proof in section \ref{sec: IS MV error}. We change variables $bn \rightarrow n$, and the rest follows \emph{mutatis mutandis} (it is important that with $N \ll Q^{\frac{1}{2}}$ we have $BN \ll Q^{1-\epsilon}$). This completes the proof of Lemma \ref{lem: main term B MV}.

\section{Acknowledgments}

The author thanks Kevin Ford and George Shakan for helpful comments on earlier drafts of this work. The author also thanks the anonymous referee for helpful comments. The author was supported in this work by NSF grant DMS-1501982, and is grateful to Kevin Ford for financial support.

\end{document}